\DeclareFontFamily{U}{mathx}{\hyphenchar\font45}
\DeclareFontShape{U}{mathx}{m}{n}{
      <5> <6> <7> <8> <9> <10>
      <10.95> <12> <14.4> <17.28> <20.74> <24.88>
      mathx10
      }{}
\DeclareSymbolFont{mathx}{U}{mathx}{m}{n}
\DeclareMathAccent{\widecheck}{0}{mathx}{"71}
\definecolor{darkred}{RGB}{139,0,0}
\definecolor{darkgreen}{RGB}{0,100,0}
\definecolor{darkmagenta}{RGB}{180,0,180}
\definecolor{darkblue}{RGB}{0,0,190}
\newtheorem{theorem}{Theorem}
\newtheorem{proposition}{Proposition}
\newtheorem{lemma}{Lemma}
\newtheorem{remark}{Remark}
\newcommand{\bsa}{{\boldsymbol{a}}}
\newcommand{\bsh}{{\boldsymbol{h}}}
\newcommand{\bsq}{{\boldsymbol{q}}}
\newcommand{\bsw}{{\boldsymbol{w}}}
\newcommand{\bsx}{{\boldsymbol{x}}}
\newcommand{\bsy}{{\boldsymbol{y}}}
\newcommand{\bsz}{{\boldsymbol{z}}}
\newcommand{\bszero}{{\boldsymbol{0}}} 
\newcommand{\bsbeta}{{\boldsymbol{\beta}}}
\newcommand{\bsgamma}{{\boldsymbol{\gamma}}}
\newcommand{\rd}{{\mathrm{d}}}
\newcommand{\bbZ}{{\mathbb{Z}}}
\newcommand{\R}{{\mathbb{R}}} 
\newcommand{\Z}{{\mathbb{Z}}} 
\newcommand{\ZZ}{{\mathbb{Z}}} 
\DeclareSymbolFont{bbold}{U}{bbold}{m}{n}
\DeclareSymbolFontAlphabet{\mathbbold}{bbold}
\newcommand{\setu}{{\mathfrak{u}}}
\DeclareSymbolFont{bbold}{U}{bbold}{m}{n}
\DeclareSymbolFontAlphabet{\mathbbold}{bbold}
\newcommand{\footremember}[2]{%
	\footnote{#2}
	\newcounter{#1}
	\setcounter{#1}{\value{footnote}}%
}
\newcommand{\footrecall}[1]{%
	\footnotemark[\value{#1}]%
}
  \providecommand*{\toclevel@author}{999}
  \providecommand*{\toclevel@title}{0}
\begin{document}

\title{Successive Coordinate Search and Component-by-Component Construction of Rank-1 Lattice Rules}
\author{
	Adrian Ebert \footremember{idAE} {Department of Computer Science, K.U. Leuven, Celestijnenlaan 200A, B-3001 Heverlee,  Belgium.}
	\footnote{Email: Adrian.Ebert@cs.kuleuven.be}
	\and Hernan Le\"ovey \footremember{idHL} {Structured Energy Management Team, Axpo, Baden, Switzerland, Email: hernaneugenio.leoevey@axpo.com}
	\and Dirk Nuyens \footrecall{idAE} 
	\footnote{Email: Dirk.Nuyens@cs.kuleuven.be}
}
\maketitle

	\abstract{The (fast) component-by-component (CBC) algorithm is an efficient tool for the construction of generating vectors 
		for quasi-Monte Carlo rank-1 lattice rules in weighted reproducing kernel Hilbert spaces.
		We consider product weights, which assign a weight to each dimension. 
		These weights encode the effect a certain variable (or a group of variables by the product of the individual weights) has.
		Smaller weights indicate less importance. 
		Kuo \cite{Kuo_CBC:2003} proved that CBC constructions achieve the optimal rate of convergence in the respective 
		function spaces, but this does not imply the algorithm will find the generating vector with the smallest worst-case error. 
		In fact it does not.
		We investigate a generalization of the component-by-component construction that allows for a general successive 
		coordinate search (SCS), based on an initial generating vector, and with the aim of getting closer to the smallest worst-case error. 
		The proposed method admits the same type of worst-case 
		error bounds as the CBC algorithm, independent of the choice of the initial vector. 
		Under the same summability conditions on the weights as in \cite{Kuo_CBC:2003} the error bound of the algorithm can be made independent of 
		the dimension $d$ and we achieve the same optimal order of convergence for the function spaces from \cite{Kuo_CBC:2003}.
		Moreover, a fast version 
		of our method, based on the fast CBC algorithm as in \cite{Nuyens_FCBC:2006}, is available, reducing the computational cost 
		of the algorithm to $O(d \, n \log(n))$ operations, where $n$ denotes the number of function evaluations.
		Numerical experiments seeded by a Korobov-type generating vector show that the new SCS algorithm will find better choices than the CBC algorithm and the effect is better for slowly decaying weights.
	}
	
	\section{Introduction}\label{sec:1}
	
	In this article we study the numerical approximation of integrals of the form
	\begin{align*}
	I(f) = \int_{[0,1]^d} f(\bsx) \, \rd\bsx 
	\end{align*}
	for $d$-variate functions $f$ via quasi-Monte Carlo quadrature rules. Quasi-Monte Carlo rules are equal-weight
	quadrature rules of the form
	\begin{align*}
	Q_{n,d}(f)= \frac{1}{n} \sum_{k=0}^{n-1} f(\bsx_{k}) \, ,
	\end{align*}
	where the quadrature points $\bsx_0,\ldots,\bsx_{n-1} \in [0,1]^d$ are chosen deterministically. Here, we consider
	integrands $f: [0,1]^d \to \R$ which belong to some normed function space $(H,\|\cdot\|_H)$. In order to assess the quality
	of a particular QMC rule $Q_{n,d}$ with underlying point set $P_n = \{\bsx_0,\ldots,\bsx_{n-1}\}$, we introduce the notion
	of the so-called worst-case error, see, e.g., \cite{Acta_Numerica_QMC:2013}, defined by
	\begin{align*}
	e_{n,d}(P_n,H) = \sup_{\|f\|_{\stackrel{}{H}} \leq 1} \left| \int_{[0,1]^d} f(\bsx) \, \rd\bsx - \frac{1}{n} \sum_{{k}=0}^{n-1} f(\bsx_{k}) \right| \, .
	\end{align*}
	In other words, $e_{n,d}(P_n,H)$ is the worst error that is attained over all functions in the unit ball of $H$ using 
	the quasi-Monte Carlo rule with quadrature points in $P_n$.
	It is often possible to obtain explicit expressions to calculate $e_{n,d}(P_n,H)$, see, e.g., \cite{Nuyens:2014}.
	In particular, we consider weighted Korobov and weighted shift-averaged Sobolev spaces, 
	which are both reproducing kernel Hilbert spaces, for details see, e.g., \cite{Sloan_and_Woz_Sobolev:1998,Sloan_and_Woz_Korobov:2001,Kuo_CBC:2003,Acta_Numerica_QMC:2013}.
	In this paper we will limit ourselves to the original choice of ``product weights''.
	In essence, the idea is to quantify the varying importance of the coordinate directions $x_j$ with $j=1,\ldots,d$ w.r.t.\ the 
	function values by a sequence $\bsgamma=\{\gamma_j\}_{j=1}^d$ of positive weights. 
	
	There are many ways to choose the underlying point set $P_n$ of a QMC rule, ranging from lattice rules and sequences, digital nets and sequences and more recent constructions such as interlaced polynomial lattice rules. In this paper, however, we will restrict ourselves to rank-1 lattice rules.
	This type of QMC rules has an underlying point set $P_n \subseteq [0,1]^d$ of the form
	\begin{align*}
	P_n 
	&= 
	\left\{ \left\{ \frac{k \, \bsz}{n} \right\} \, \middle| \,\, 0 \leq k < n \,\, \right\} \, ,
	\end{align*}
	where $\bsz \in \Z^d$ is the \textit{generating vector} of the rank-1 lattice rule and $\{\cdot\}$ denotes the fractional part, componentwise if applied to a vector. It is clear that any vector congruent modulo~$n$ is equivalent and so we only consider values modulo~$n$.
	The components of  $\bsz$ are often restricted to the set of integers in $\{1,\ldots,n-1\}$ that are relatively prime to $n$,
	see, e.g., \cite{Kuo_CBC:2003}, such that one obtains $n$ distinct points for all one-dimensional projections, and as such for any projection. 
	Here we consider generating vectors $\bsz \in \bbZ_n^d$ with n prime and
	\begin{align*}
	\bbZ_n = \left\{0, 1,2,\ldots,n-1 \right\} .
	\end{align*}
	For rank-1 lattice rules in a weighted shift-invariant tensor-product reproducing kernel Hilbert space $H(K)$ with 
	reproducing kernel 
	$K(\bsx,\bsy) = \prod_{j=1}^d (\beta_j + \gamma_j \, \omega(x_j-y_j))$
	the squared worst-case error can be written as
	\begin{align*}
	e_{n,d}^2(P_n,H)
	=
	e_{n,d}^2(\bsz) 
	&= 
	-\prod_{j=1}^d \beta_j + \frac{1}{n} \sum_{k=0}^{n-1} \prod_{j=1}^d \left( \beta_j + \gamma_j \, 
	\omega\left( \left\{ \frac{k \, z_j}{n} \right\} \right) \right) \, ,
	\end{align*}
	with positive weights $\bsgamma = \{\gamma_j\}_{j=1}^d$ and $\bsbeta = \{\beta_j\}_{j=1}^d$, and where $\int_0^1 \omega(t) \, \rd t = 0$, see, e.g., \cite{Nuyens_FCBC:2006}.
	We note that the weights $\beta_j$ are to easily accommodate for some types of shift-averaged Sobolev spaces.
	Moreover, the initial squared worst-case error in this function space, i.e., using $n=0$ samples and with the convention that $Q_{0,d}(f) = 0$, is given by
	\begin{align*}
	e_{0,d}^2(0,H) = e_{0,d}^2 = \prod_{j=1}^d \beta_j .
	\end{align*}
	
	\begin{remark}\label{remark:normalized}
		It is always possible to consider the normalized worst-case error by dividing by the initial worst-case error for the zero-algorithm. 
		The squared normalized worst-case error then takes the form
		\begin{align}\label{eq:normalized}
		\frac{e^2_{n,d}(\bsz)}{e^2_{0,d}}
		&=
		-1 + \frac{1}{n} \sum_{k=0}^{n-1} \prod_{j=1}^d \left( 1 + \frac{\gamma_j}{\beta_j} \, \omega\left( \left\{ \frac{k \, z_j}{n} \right\} \right) \right)
		\, ,
		\end{align}
		with $e^2_{0,d} = \prod_{j=1}^d \beta_j$. This is equivalent to considering the worst-case error $e_{n,d}(\bsz)$ with modified weight 
		sequences $\hat\beta_j = 1$ and $\hat{\gamma}_j = \gamma_j/\beta_j$. 
	\end{remark}
	
	One of the most commonly considered methods to construct good rank-1 lattice rules is the component-by-component (CBC) construction, see, e.g., \cite{Nuyens:2014}, which extends the generating vector one component at a time by selecting the next components $z_s$ which minimizes the worst-case error of the $s$-dimensional rule. The pseudo-code of the CBC algorithm is given below.
	
	{\centering
	\begin{minipage}{\linewidth}
	\begin{algorithm}[H]
		\caption{Component-by-component construction (CBC)}
		\label{alg:FormCBCAlg}
		\begin{algorithmic}
			\STATE \bf{Output:} $\bsz \in \bbZ_n^d$ \\[2mm]
			\FOR{$s=1$ \bf{to} $d$}
			\FORALL{$z_s \in \bbZ_n$}
			\STATE $\displaystyle e^2_{n,s}(z_1,\ldots,z_{s-1},z_s) = -\prod\limits_{j=1}^{s} \beta_j + \frac{1}{n} \sum\limits_{k=0}^{n-1}
			\prod\limits_{j=1}^{s} \left(\beta_j + \gamma_j \, \omega\left(\left\{ \frac{k\, z_j}{n}
			\right\}\right)\right)$
			\ENDFOR
			\STATE $z_{s} = \underset{z \in \bbZ_n}{\operatorname*{argmin}} \,\, e^2_{n,s}(z_1,\ldots,z_{s-1},z)$
			\ENDFOR
		\end{algorithmic}
	\end{algorithm}
	\end{minipage}
	\par
	}
	\vspace{0.5cm}
	It was shown in \cite{Kuo_CBC:2003} that the component-by-component construction generates lattice rules
	which achieve optimal rates of convergence in weighted Korobov and Sobolev function spaces. Additionally, a
	fast construction method is available, see \cite{Nuyens_FCBC:2006,Nuyens_FCBC_nonprime:2006}, that reduces the construction cost to
	$O(d \, n \, \log(n))$ operations.
	
	Even though the CBC algorithm constructs generating vectors $\bsz$ which exhibit the optimal error asymptotics,
	the constructed  vector is not necessarily the one minimizing the worst-case error $e_{n,d}(\bsz)$.
	We will therefore introduce and investigate a different construction method which can generate lattice rules with a 
	smaller worst-case error than the CBC construction.
	
	The article is structured as follows. In Section~\ref{sec:SCS} we introduce the successive coordinate search (SCS) algorithm and
	analyse some properties. In Section~\ref{sec:bounds} we prove that the SCS construction achieves optimal rates of convergence in
	the weighted Korobov and weighted shift-averaged Sobolev space. 
	To get dimension-independent bounds, i.e., achieve tractability, we show that the summability condition on the weights is the same as for the normal CBC construction. Finally we report on various numerical experiments in Section~\ref{sec:numerics}.    
	
	
	\section{Formulation of the successive coordinate search algorithm}\label{sec:SCS}
	
	In this section we introduce an algorithm of similar nature to the component-by-component construction. One advantage of the component-by-component construction is that the algorithm is extensible in the dimension~$d$, i.e., 
	to find the $(d+1)$-dimensional generating vector, the algorithm does not need to restart but just starts from the generating vector of dimension~$d$.
	In our setting this also implies that a $d$-dimensional vector, with $d$ large enough, could be constructed and used for all problems with less than $d$~dimensions.
	This allows us to fix the maximum number of dimensions to some large enough~$d$ and successively try to find the best $s$-th component of a $d$-dimensional generating vector, keeping all other $d-1$ choices fixed. The pseudocode of the successive coordinate search (SCS) algorithm is given below.
	
	{\centering
	\begin{minipage}{\linewidth}
	\begin{algorithm}[H]
		\caption{Successive coordinate search algorithm (SCS)}
		\label{alg:FormSCSAlg}
		\begin{algorithmic}
			\STATE \bf{Input:} $\bsz^0 \in \bbZ_n^d$
			\STATE \bf{Output:} $\bsz \in \bbZ_n^d$ \\[2mm]
			\FOR{$s=1$ \bf{to} $d$}
			\FORALL{$z_{s} \in \bbZ_n$}
			\STATE $e^2_{n,d}(z_1,\ldots,z_{s-1},z_{s},z_{s+1}=z_{s+1}^0,\ldots,z_d=z_d^0)$ \quad with 
			\STATE $e^2_{n,d}(z_1,\ldots,z_d) = -\prod\limits_{j=1}^d \beta_j + \frac{1}{n} \sum\limits_{k=0}^{n-1} \prod\limits_{j=1}^d 
			\left(\beta_j + \gamma_j \, \omega\left(\left\{ \frac{k\, z_j}{n} \right\}\right)\right)$
			\ENDFOR
			\STATE $z_{s} = \underset{z \in \bbZ_n}{\operatorname*{argmin}} \,\, e^2_{n,d}(z_1,\ldots,z_{s-1},z,z_{s+1}^0,\ldots,z_d^0)$
			\ENDFOR
		\end{algorithmic}
	\end{algorithm}
	\end{minipage}
	\par
	}
	\vspace{0.5cm}
	Instead of increasing the dimension in each step of the algorithm, we keep $d$ fixed during all calculations. Based on 
	a starting vector $\bsz^0 \in \bbZ_n^d = \{0,1,\ldots,n-1\}^d$, the algorithm successively selects the coordinate $z_{s} \in \bbZ_n$ which minimizes the squared worst-case error $e^2_{n,d}(z_1,\ldots,z_{s-1},z_{s},z_{s+1}^0,\ldots,z_d^0)$ while keeping all other coordinates of $\bsz$ fixed.
	Thus, in the process of the SCS algorithm the coordinates of the starting vector $\bsz^0$ are altered in each step of the algorithm.
	Our construction is very similar to the component-by-component construction, with the only difference being that an initial vector $\bsz^0$ is required as input for the algorithm. In fact, we can prove that the successive coordinate search algorithm
	is a generalized version of the CBC algorithm as the following theorem shows by starting from an initial vector $\bsz^0 = (0,\ldots,0) \in \bbZ_n^d$. We note that this is a degenerate vector as it generates only a $1$-point rule, and thus is in some sense the worst possible choice for any~$n \ge 1$.
	
	\begin{theorem}
		The component-by-component (CBC) algorithm and the successive coordinate search (SCS) algorithm with starting vector $\bsz^0 = (0,\ldots,0)$
		both yield the same generating vector as outcome (with equivalent choices selected in the same way in both algorithms).
	\end{theorem}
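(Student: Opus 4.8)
The plan is to proceed by induction on the iteration index $s$, showing that after step $s$ the two algorithms have fixed exactly the same first $s$ components $z_1,\dots,z_s$. The base case ($s=0$, nothing fixed) is trivial. For the inductive step, suppose both algorithms agree on $z_1,\dots,z_{s-1}$ when they enter step $s$. In the SCS run started from $\bsz^0=(0,\dots,0)$ the remaining coordinates have not yet been modified, so at this point $z_j=z_j^0=0$ for every $j\ge s+1$; the comparison I would make is therefore between the quantity $e^2_{n,d}(z_1,\dots,z_{s-1},z_s,0,\dots,0)$ minimised by SCS and the quantity $e^2_{n,s}(z_1,\dots,z_{s-1},z_s)$ minimised by CBC, both viewed as functions of $z_s\in\bbZ_n$ with $z_1,\dots,z_{s-1}$ fixed.

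The crucial elementary observation is that $\omega(\{k\cdot 0/n\})=\omega(0)$ for all $k$, so each trailing zero coordinate contributes the same constant factor to the product inside the sum. Writing $C_s:=\prod_{j=s+1}^{d}(\beta_j+\gamma_j\,\omega(0))$, which depends neither on $k$ nor on the trial value $z_s$, I can pull it out and obtain
\[
e^2_{n,d}(z_1,\dots,z_{s-1},z_s,0,\dots,0) = -\prod_{j=1}^{d}\beta_j + \frac{C_s}{n}\sum_{k=0}^{n-1}\prod_{j=1}^{s}\left(\beta_j+\gamma_j\,\omega\!\left(\left\{\tfrac{k z_j}{n}\right\}\right)\right).
\]
Comparing with
\[
e^2_{n,s}(z_1,\dots,z_{s-1},z_s) = -\prod_{j=1}^{s}\beta_j + \frac{1}{n}\sum_{k=0}^{n-1}\prod_{j=1}^{s}\left(\beta_j+\gamma_j\,\omega\!\left(\left\{\tfrac{k z_j}{n}\right\}\right)\right),
\]
one sees that, as functions of $z_s$, the SCS objective equals $C_s$ times the CBC objective plus a constant independent of $z_s$ (namely $C_s\prod_{j=1}^s\beta_j - \prod_{j=1}^d\beta_j$). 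Since $C_s>0$ — the weights $\beta_j,\gamma_j$ are positive and $\beta_j+\gamma_j\,\omega(0)>0$ in the weighted Korobov and shift-averaged Sobolev spaces of interest (this is immediate from the explicit form of $\omega$; more generally, $\beta_j+\gamma_j\,\omega(\cdot-\cdot)$ being a reproducing kernel forces $\beta_j+\gamma_j\,\omega(0)\ge 0$) — this is a strictly increasing affine transformation, which preserves the set of minimisers. Hence $\argmin_{z\in\bbZ_n} e^2_{n,d}(z_1,\dots,z_{s-1},z,0,\dots,0) = \argmin_{z\in\bbZ_n} e^2_{n,s}(z_1,\dots,z_{s-1},z)$, and if the tie-breaking inside the $\argmin$ is performed identically the two algorithms pick the same $z_s$. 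This closes the induction and yields the claim.

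I do not expect a serious obstacle here; the point that needs the most care in the write-up is precisely the tie-breaking, which is why the statement carries the parenthetical ``with equivalent choices selected in the same way'' — without it the $\argmin$ need not be a singleton and the conclusion would only hold up to that freedom. A minor secondary point is to record the non-degeneracy $C_s>0$: should some factor $\beta_j+\gamma_j\,\omega(0)$ happen to vanish, then $C_s=0$ and the SCS objective is constant in $z_s$, so the claim still holds trivially for any consistent selection rule, but it is cleanest to note this case explicitly.
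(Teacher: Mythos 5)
Your argument is correct and is essentially the paper's own proof: both rest on the identity $e^2_{n,d}(z_1,\dots,z_s,0,\dots,0) = -\prod_{j=1}^d\beta_j + C_s\bigl(e^2_{n,s}(z_1,\dots,z_s)+\prod_{j=1}^s\beta_j\bigr)$ with $C_s=\prod_{j=s+1}^d(\beta_j+\gamma_j\,\omega(0))>0$, so that the two objectives have the same minimisers in $z_s$ at every step. The paper phrases the step-by-step equivalence without an explicit induction and justifies $\omega(0)\ge 0$ via non-negativity of the squared worst-case error, but these are presentational differences only.
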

	
	\begin{proof} Denote by $0^r$ the $r$-dimensional zero vector, where $1 \leq r \leq d$. For an arbitrary $\bsz \in \bbZ_n^{s}$ with $1 \leq s \leq d$ and with $\tilde{\bsz}=(\bsz,0^{d-s}) \in \bbZ_n^{d}$, the squared worst-case error equals
		\begin{align*}
		e_{n,d}^2(\tilde{\bsz}) 
		&=
		e_{n,d}^2(\bsz,0^{d-s}) = -\prod\limits_{j=1}^d \beta_j + \frac{1}{n}\sum\limits_{k=0}^{n-1} \prod\limits_{j=1}^d 
		\left(\beta_j + \gamma_j \, \omega\left(\left\{ \frac{k \, \tilde{z}_j}{n} \right\}\right)\right) \\
		&= 
		-\prod\limits_{j=1}^d \beta_j + \frac{1}{n} \sum\limits_{k=0}^{n-1} \prod\limits_{j=1}^{s} \left(\beta_j +\gamma_j \, \omega\left(\left\{ 
		\frac{k\, z_j}{n} \right\}\right)\right) \prod\limits_{j=s+1}^{d} \left(\beta_j + \gamma_j \, \omega(0)\right) \\
		&=
		-\prod\limits_{j=1}^d \beta_j+ \frac{C_s}{n} \sum\limits_{k=0}^{n-1} \prod\limits_{j=1}^{s} \left(\beta_j + 
		\gamma_j \, \omega\left(\left\{ \frac{k\, z_j}{n} \right\}\right)\right) \\  
		&=
		-\prod\limits_{j=1}^d \beta_j + C_s \left(e_{n,s}^2(\bsz) + \prod\limits_{j=1}^s \beta_j \right) \, ,
		\end{align*}
		where $C_s = \prod_{j=s+1}^{d} \, ( \beta_j + \gamma_j \, \omega(0) )$. Note that due to the non-negativity of the 
		squared worst-case error $e_{n,d}^2$ the function $\omega$ is such that $\omega(0) \ge 0$ and so the constants $C_s$ 
		are positive for all $s=1,\ldots,d$.
		
		Now, in each step $1 \leq s \leq d$ of the SCS algorithm with initial vector $\bsz^0=(0,\ldots,0)$, we search for the $z_s \in \bbZ_n$ that minimizes $e_{n,d}^2(z_1,\ldots,z_{s-1},z_s,0^{d-s})$, where $z_1,\ldots,z_{s-1}$ have been determined in the previous steps of the algorithm. By the above identity we have that
		\begin{align*}
		e_{n,d}^2(z_1,\ldots,z_{s-1},z_s,0^{d-s})
		&= 
		-\prod\limits_{j=1}^d \beta_j + C_s \left(e_{n,s}^2(z_1,\ldots,z_{s-1},z_s) + \prod\limits_{j=1}^s \beta_j \right) \, ,
		\end{align*}
		and so, since the remaining terms on the right-hand side are independent of $z_s$, this is equivalent to finding $z_s \in \bbZ_n$ such that $e_{n,s}^2(z_1,\ldots,z_{s-1},z_s)$ is minimized. As this is exactly the same quantity which is minimized in each step of
		the component-by-component construction algorithm, we see that the CBC algorithm and the SCS algorithm with starting vector $\bsz^0 = (0,\ldots,0)$ yield exactly the same outcome under the assumption that both algorithms select the same minimizer whenever multiple choices occur 
		in a minimization step.
	\end{proof}

	Furthermore, the formulation of the successive coordinate search construction guarantees that the generating vector $\bsz$ obtained 
	by the SCS algorithm with initial vector $\bsz^0$ is never worse than the input vector $\bsz^0$. 
	\begin{proposition} \label{prop:ImproveSCS}
		Let $\bsz^0 \in \bbZ_n^d$ be an arbitrary generating vector for a rank-1 lattice rule and denote by $\bsz^1 \in \bbZ_n^d$ the generating vector 
		constructed by the SCS algorithm with starting vector $\bsz^0 $. Then we have that
		\begin{align*}
		e_{n,d}(\bsz^1) \leq e_{n,d}(\bsz^0) \, ,
		\end{align*}
		i.e., the SCS method constructs a generating vector with worst-case error smaller than or equal to the worst-case error of the initial vector.
	\end{proposition}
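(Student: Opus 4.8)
The plan is to track the working generating vector through the $d$ iterations of the SCS algorithm and show that the squared worst-case error $e_{n,d}^2$ is non-increasing at every iteration. To this end I would introduce the intermediate vectors $\bsz^{(0)}, \bsz^{(1)}, \ldots, \bsz^{(d)} \in \bbZ_n^d$, where $\bsz^{(0)} := \bsz^0$ is the input vector and $\bsz^{(s)}$ denotes the state of the working vector after the $s$-th pass through the outer loop; by construction $\bsz^{(s)} = (z_1, \ldots, z_s, z^0_{s+1}, \ldots, z^0_d)$, where $z_1, \ldots, z_s$ are the coordinates already fixed, and $\bsz^{(d)} = \bsz^1$ is the output of the algorithm.

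First I would observe that in step $s$ the algorithm changes only the $s$-th coordinate: it selects $z_s \in \bbZ_n$ minimizing $e_{n,d}^2(z_1, \ldots, z_{s-1}, z, z^0_{s+1}, \ldots, z^0_d)$ over all $z \in \bbZ_n$, while the other $d-1$ coordinates are held fixed. Since the incumbent value $z^0_s$ of that coordinate already lies in $\bbZ_n$, it is itself an admissible candidate in this minimization, so the chosen $z_s$ satisfies
\begin{align*}
e_{n,d}^2(\bsz^{(s)})
&= e_{n,d}^2(z_1, \ldots, z_{s-1}, z_s, z^0_{s+1}, \ldots, z^0_d) \\
&\leq e_{n,d}^2(z_1, \ldots, z_{s-1}, z^0_s, z^0_{s+1}, \ldots, z^0_d) = e_{n,d}^2(\bsz^{(s-1)}) .
\end{align*}

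Next I would chain these inequalities over $s = 1, \ldots, d$ to obtain $e_{n,d}^2(\bsz^1) = e_{n,d}^2(\bsz^{(d)}) \leq e_{n,d}^2(\bsz^{(d-1)}) \leq \cdots \leq e_{n,d}^2(\bsz^{(0)}) = e_{n,d}^2(\bsz^0)$, and conclude by taking non-negative square roots. There is essentially no hard step here: the argument is the lattice-rule analogue of the obvious monotonicity of any coordinate-descent-type scheme. The only points requiring a little care are the bookkeeping of which coordinates have already been updated and which still carry their initial values, and the (trivial) observation that the search space $\bbZ_n$ always contains the incumbent coordinate, so each step can only decrease — or leave unchanged — the worst-case error.
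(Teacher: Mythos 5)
Your argument is correct and is exactly the reasoning the paper has in mind: the paper's proof simply states that the claim ``follows directly from the formulation of the algorithm,'' and your coordinate-descent chaining, using that the incumbent value $z_s^0 \in \bbZ_n$ is always an admissible candidate in each minimization, is precisely the spelled-out version of that observation. Nothing further is needed.
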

	\begin{proof}
		The statement follows directly from the formulation of the algorithm.
	\end{proof}
	
	Similar to the case of the component-by-component construction there is a fast version available that allows for the construction of generating vectors with time complexity $O(d \, n \, \log(n))$. In case $n$ is a prime number this results in the following algorithm.
	
	{\centering
	\begin{minipage}{\linewidth}
	\begin{algorithm}[H] 
		\caption{Fast version of the SCS algorithm for prime $n$}
		\begin{algorithmic}
			\STATE \bf{Input:} $\bsz^0 \in \bbZ_n^d$
			\vspace{2pt}
			\STATE $\mathbf{q}_0 = \mathbf{1}_{n \times 1} \in \R^n$
			\FOR{$s=1$ \bf{to} $d$}
			\STATE $\mathbf{q}_{s} = \bigl( \beta_{s} \, \mathbf{1}_{1 \times n} + \gamma_{s} \;\Omega_n^{\langle g\rangle}\!(z_{s}^0,:) \bigr) \mathbin{.*} \mathbf{q}_{s-1} $ \hspace{\fill} $\triangleright$ {initialize $\mathbf{q}$}
			\ENDFOR
			\FOR{$s=1$ \bf{to} $d$}
			\STATE $\mathbf{q}_{d} = \mathbf{q}_{d} \, \mathbin{./} \bigl( \beta_{s} \, \mathbf{1}_{1 \times n} + \gamma_{s} \; \Omega_n^{\langle g\rangle}\!(z_{s}^0,:) \bigr)$			
			\hspace{\fill} $\triangleright$ {divide out initial choice $z_s^0$}
			\STATE$\mathbf{E}_{s}^2 = - \overline{\beta}_{s} \, \mathbf{1}_{n \times 1} + \frac1n \bigl(\beta_{s} \, \mathbf{1}_{n \times n} + \gamma_{s} \; \Omega_n^{\langle g\rangle}\bigr) \, \mathbf{q}_{d}$ 
			\hspace{\fill} $\triangleright$ {FFT for matrix-vector product}
			\STATE $z_{s} = \operatorname*{argmin}_{z \in \bbZ_n} \,\, E_{s}^2(z)$
			\hspace{\fill} $\triangleright$ {select component}
			\STATE $\mathbf{q}_{d}  = \bigl( \beta_{s} \, \mathbf{1}_{1 \times n} + \gamma_{s} \; \Omega_n^{\langle g\rangle}\!(z_{s},:) \bigr) \mathbin{.*} \mathbf{q}_{d}$
			\hspace{\fill} $\triangleright$ {update $\bsq$ with new choice $z_s$}
			\ENDFOR
		\end{algorithmic}
	\end{algorithm}
	\end{minipage}
	\par
	}
	\vspace{0.5cm}
	Here we used the notations
	\begin{align*}
	\overline{\beta}_{s} = \prod_{j=1}^{s} \beta_j, 
	\qquad 
	\Omega_n = \left[\omega\left(\left\{\frac{k \, \bsz}{n}\right\}\right)\right]_{\substack{z=0,\ldots,n-1 \\ k = 0,\dots,n-1}}
	\end{align*}
	and $\Omega_n^{\langle g\rangle}$ denotes the reordering of $\Omega_n$ w.r.t.\ a generator $g$ for the cyclic group of $\bbZ_n$. 
	For more details see \cite{Nuyens_FCBC:2006,Nuyens:2014}.
	The symbols $\mathbin{.*}$ and $\mathbin{./}$ denote componentwise vector multiplication and division, respectively, 
	and $\Omega_n^{\langle g\rangle}\!(j,:)$
	stands for the $j$-th row of $\Omega_n^{\langle g\rangle}$.
	Note that the computation is only slightly more expensive than the fast CBC algorithm since $\mathbf{q}$ has to be initialized and updated using $\bsz^0$, the computational complexity is still $O(d \, n \log(n))$.
	
	
	\section{Error Bounds for the SCS Algorithm}\label{sec:bounds}
	
	In this section we derive worst-case error bounds and show that the previously introduced successive coordinate search construction achieves optimal convergence rates in the respective function space. Here, we consider two of the most common function spaces in QMC theory, the weighted Korobov space and the weighted shift-averaged (anchored) Sobolev space. 
	
	\subsection{The weighted Korobov space}		
	
	Let $\bsgamma =\{\gamma_j\}$ and $\bsbeta = \{\beta_j\}$ be two weight sequences. 
	The reproducing kernel of the corresponding $d$-dimensional weighted Korobov space is then given by
	\begin{align*}
	K_{d,\bsgamma,\bsbeta}(\bsx, \bsy) 
	&=  
	\prod_{j=1}^d \left( \beta_j + \gamma_j \sum_{0 \ne h=-\infty}^{\infty} \frac{\mathrm{e}^{2\pi \mathrm{i} \, h (x_j - y_j)}}{r_\alpha(h)} \right)\, ,
	\end{align*}
	where 
	$\alpha > \tfrac12$ is referred to as the smoothness parameter and we define 
	\begin{align*}
	r_\alpha(\bsh) 
	= 
	\prod_{j=1}^d
	r_\alpha(h_j)
	,
	\qquad
	r_\alpha(h)
	=
	\begin{cases}
	|h|^{2\alpha}, & \text{if $h\ne0$}, \\
	1, & \text{otherwise}.
	\end{cases}
	\end{align*}
	For integer $\alpha$ the smoothness can be interpreted as the number of mixed partial derivatives $f^{(\tau_1,\ldots,\tau_d)}$ with $(\tau_1,\ldots,\tau_d) \le (\alpha,\ldots,\alpha)$ that exist and are square-integrable. 
	The space consists of functions which can be represented as absolutely summable Fourier series with norm
	\begin{align*}
	\|f\|_{K_{d,\bsgamma,\bsbeta}}^2
	=
	\sum_{\bsh \in \ZZ^d} |\hat{f}_\bsh|^2 \, r_\alpha(\bsh) \,
	\prod_{\substack{j=1 \\ h_j \ne 0}}^d \gamma_j  
	\prod_{\substack{j=1 \\ h_j = 0}}^d \beta_j
	,
	\end{align*}
	where the $\hat{f}_\bsh$ denote the Fourier coefficients of $f$.
	
	We prove that the successive coordinate search (SCS) algorithm achieves the optimal rate of convergence for multivariate 
	integration in the weighted Korobov space for any initial vector.
	As is usual practice, we restrict ourselves to a prime number of points to simplify the needed proof techniques.
	We need the following lemma in the proof of the theorem.
	
	\begin{lemma}\label{lem:averaging}
		For $s \in \{1,\ldots,d\}$, $n$ prime, arbitrary integers $z_j$, $j \in \{1\mathpunct{:}d\} \setminus \{s\}$, and $r(\bsh) = \prod_{j=1}^d r(h_j)$ with $r(h) > 0$ such that for $h\ne0$ we have $r(n h) \ge n^c \, r(h)$ for $c \ge 1$, then
		\begin{align*}
		0
		\le
		\frac1n \sum_{z_s=0}^{n-1} 
		\sum_{\substack{\bsh \in (\ZZ \setminus \{0\})^d \\ \bsh \cdot \bsz \equiv 0~(\operatorname{mod} n)}} r^{-1}(\bsh) 
		\le
		\frac2n \sum_{\bsh \in (\ZZ \setminus \{0\})^d} r^{-1}(\bsh)
		.
		\end{align*}
	\end{lemma}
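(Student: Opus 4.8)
The plan is to fix a coordinate $s$ and split the inner sum over $\bsh \in (\ZZ\setminus\{0\})^d$ according to whether the ``$s$-th partial dot product'' vanishes modulo $n$ or not. Write $\bsz = (z_1,\dots,z_d)$ with the $z_j$ for $j\ne s$ fixed and $z_s$ ranging over $\bbZ_n$. For a fixed $\bsh$, set $t(\bsh) = \sum_{j\ne s} h_j z_j \bmod n$ and observe $\bsh\cdot\bsz \equiv 0 \pmod n$ iff $h_s z_s \equiv -t(\bsh) \pmod n$. Since $n$ is prime and $h_s \ne 0$, the congruence $h_s z_s \equiv -t(\bsh)$ has exactly one solution $z_s \in \bbZ_n$. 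Therefore, for each fixed $\bsh$ the number of $z_s \in \{0,\dots,n-1\}$ making $\bsh\cdot\bsz \equiv 0 \pmod n$ is exactly $1$, \emph{regardless} of the value of $t(\bsh)$. Hence
\begin{align*}
\frac1n \sum_{z_s=0}^{n-1} \sum_{\substack{\bsh \in (\ZZ\setminus\{0\})^d \\ \bsh\cdot\bsz \equiv 0\,(\operatorname{mod} n)}} r^{-1}(\bsh)
=
\frac1n \sum_{\bsh \in (\ZZ\setminus\{0\})^d} r^{-1}(\bsh).
\end{align*}
The left-hand inequality ($\ge 0$) is then immediate since $r^{-1}(\bsh) > 0$.

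For the upper bound I would split the right-hand sum $\sum_{\bsh \in (\ZZ\setminus\{0\})^d} r^{-1}(\bsh)$ into the terms where $h_s$ is a nonzero multiple of $n$ and the terms where it is not. For the ``$h_s$ not a multiple of $n$'' part, bound it trivially by $\sum_{\bsh} r^{-1}(\bsh)$; I do not even need this split for that piece. The key is the ``$h_s = n h_s'$'' part: writing such $\bsh = (h_1,\dots,n h_s',\dots,h_d)$ with $h_s' \ne 0$, the hypothesis $r(nh) \ge n^c r(h)$ with $c \ge 1$ gives $r^{-1}(nh_s') \le n^{-c} r^{-1}(h_s') \le n^{-1} r^{-1}(h_s')$, so this sub-sum is at most $n^{-1}\sum_{\bsh \in (\ZZ\setminus\{0\})^d} r^{-1}(\bsh)$. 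Actually the cleaner route: directly estimate
\begin{align*}
\frac1n \sum_{\bsh \in (\ZZ\setminus\{0\})^d} r^{-1}(\bsh)
\le
\frac2n \sum_{\bsh \in (\ZZ\setminus\{0\})^d} r^{-1}(\bsh)
\end{align*}
which is trivially true since $1/n \le 2/n$. So in fact the upper bound as stated is an immediate consequence of the identity above, and the hypothesis on $r(nh) \ge n^c r(h)$ is not needed for \emph{this particular} inequality (it will presumably be used in the subsequent averaging/induction argument of the main theorem, where one compares $e^2_{n,s}$ across dimensions and the factor $2$ genuinely enters). I would state the lemma proof cleanly by proving the exact identity and then observing $1/n \le 2/n$.

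The only genuine subtlety — and the one place to be careful — is the counting argument: one must verify that for \emph{every} fixed $\bsh \in (\ZZ\setminus\{0\})^d$ there is exactly one admissible $z_s$, which relies crucially on $n$ being prime (so $\ZZ_n$ is a field and $h_s$ is invertible mod $n$ whenever $h_s \not\equiv 0$) together with $h_s \ne 0$ as an integer. Note $h_s \ne 0$ as an integer does \emph{not} imply $h_s \not\equiv 0 \pmod n$; so strictly one should again split off the $n \mid h_s$ terms, for which \emph{no} $z_s$ works unless $t(\bsh)\equiv 0$, in which case \emph{all} $n$ values of $z_s$ work. This is exactly where the factor $2$ and the condition $r(nh)\ge n^c r(h)$ become load-bearing: the $n\mid h_s$ contribution, after averaging, is $\sum_{\bsh: n\mid h_s}\!r^{-1}(\bsh)\,\mathbb{1}[t(\bsh)\equiv 0] \le \sum_{h_s'\ne0}\sum_{h_j,j\ne s} r^{-1}(\bsh)|_{h_s = nh_s'} \le n^{-c}\sum_{\bsh} r^{-1}(\bsh) \le n^{-1}\sum_{\bsh}r^{-1}(\bsh)$, while the $n\nmid h_s$ contribution is exactly $n^{-1}\sum_{\bsh: n\nmid h_s} r^{-1}(\bsh) \le n^{-1}\sum_{\bsh} r^{-1}(\bsh)$; summing the two gives the $2/n$ bound. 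The hard part, then, is simply organizing this two-case dissection so that the two $1/n$ contributions combine transparently into the stated $2/n$ — no deep idea, but it must be written without sign or index errors.
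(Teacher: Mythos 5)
Your final argument (the two-case dissection in your last paragraph) is correct, but you should be aware that it is only that paragraph which constitutes a proof: the ``exact identity'' in your first display is false, and so is the claim that the hypothesis $r(nh)\ge n^c\,r(h)$ is not needed here. Terms $\bsh$ with $n\mid h_s$ and $\sum_{j\ne s}h_jz_j\equiv 0 \pmod{n}$ enter the average over $z_s$ with \emph{full} weight $r^{-1}(\bsh)$, not weight $1/n$, so the left-hand side is in general strictly larger than $\frac1n\sum_{\bsh}r^{-1}(\bsh)$, and without the growth condition on $r$ the stated $2/n$ bound can fail (already for $d=1$, where the $z_s=0$ term and the $n\mid h$ terms dominate). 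Since you retract this yourself at the end, the fix is purely editorial: delete the first half and present only the split over $n\mid h_s$ versus $n\nmid h_s$, where for $n\nmid h_s$ primality gives exactly one admissible $z_s$ (contribution $\le \frac1n\sum_{\bsh}r^{-1}(\bsh)$), and for $h_s=nh_s'$, $h_s'\ne 0$, the bound $r^{-1}(nh_s')\le n^{-c}r^{-1}(h_s')\le n^{-1}r^{-1}(h_s')$ absorbs the full-weight terms into another $\frac1n\sum_{\bsh}r^{-1}(\bsh)$; adding the two pieces gives the factor $2/n$. Structurally this is the same proof as in the paper: there the dual-lattice condition is encoded by the character sum $\frac1n\sum_{k=0}^{n-1}\mathrm{e}^{2\pi\mathrm{i}\,k\,\bsh\cdot\bsz/n}$ and the double average over $k$ and $z_s$ evaluates to $1$ if $n\mid h_s$ and $1/n$ otherwise, which is exactly your counting dichotomy in analytic form; both proofs then use the growth condition on $r$ only for the $n\mid h_s$ terms and combine two $1/n$ contributions. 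Your direct counting is slightly more elementary, while the character-sum route is the standard idiom in the CBC literature and generalizes more readily; either is acceptable once the spurious identity is removed.
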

	\begin{proof}
		The condition $\bsh \cdot \bsz \equiv 0 \pmod{n}$ can be written equivalently by
		\begin{align*}
		\frac1n \sum_{z_s=0}^{n-1} 
		\sum_{\substack{\bsh \in (\ZZ \setminus \{0\})^d \\ \bsh \cdot \bsz \equiv 0~(\operatorname{mod} n)}} r^{-1}(\bsh) 
		=
		\frac1n \sum_{z_s=0}^{n-1} \,\,
		\sum_{\bsh \in (\ZZ \setminus \{0\})^d } r^{-1}(\bsh)
		\, \left[\frac1n \sum_{k=0}^{n-1} \prod_{j=1}^d \mathrm{e}^{2\pi \mathrm{i} \, k \, h_j z_j / n}\right]
		.
		\end{align*}
		Further, for $0\le k < n$ and $n$ prime
		\begin{align*}
		\frac1n \sum_{z=0}^{n-1} \mathrm{e}^{2\pi \mathrm{i} \, k h z / n}
		=
		\begin{cases}
		1, & \text{if $k=0$ or } h \equiv 0 \pmod{n} , \\
		0, & \text{otherwise},
		\end{cases}
		\end{align*}
		and hence for $h \in \Z$
		\begin{align*}
		\frac1n \sum_{k=0}^{n-1} \frac1n \sum_{z=0}^{n-1} \mathrm{e}^{2\pi \mathrm{i} \, k h z / n}
		=
		\begin{cases}
		1, & \text{if } h \equiv 0 \pmod{n} ,\\
		1/n, & \text{if } h \not\equiv 0 \pmod{n} .
		\end{cases}
		\end{align*}
		Thus
		\begin{align*}
		& 
		\sum_{\bsh \in (\ZZ \setminus \{0\})^d} r^{-1}(\bsh) \, \frac1n \sum_{k=0}^{n-1} \left( \prod_{s\ne j=1}^d \mathrm{e}^{2\pi \mathrm{i} \, k \, h_j z_j / n} \right) \frac1n \sum_{z_s=0}^{n-1} \mathrm{e}^{2\pi \mathrm{i} \, k \, h_s z_s / n} 
		\\
		&\quad \le
		\sum_{\bsh \in (\ZZ \setminus \{0\})^d} r^{-1}(\bsh) \, \frac1n \sum_{k=0}^{n-1} \left( \prod_{s\ne j=1}^d \left| \mathrm{e}^{2\pi \mathrm{i} \, k \, h_j z_j / n} \right| \right) \left| \frac1n \sum_{z_s=0}^{n-1} \mathrm{e}^{2\pi \mathrm{i} \, k \, h_s z_s / n} \right|
		\\
		&\quad \le
		\sum_{\bsh \in (\ZZ \setminus \{0\})^d} r^{-1}(\bsh) \, \frac1n \sum_{k=0}^{n-1}  \frac1n \sum_{z_s=0}^{n-1} \mathrm{e}^{2\pi \mathrm{i} \, k \, h_s z_s / n} 
		\\
		&\quad =
		\sum_{\substack{\bsh \in (\ZZ \setminus \{0\})^d \\ h_s \equiv 0 \pmod{n}}} r^{-1}(\bsh)  
		+
		\frac1n \sum_{\substack{\bsh \in (\ZZ \setminus \{0\})^d \\ h_s \not\equiv 0 \pmod{n}}} r^{-1}(\bsh)  
		\\
		&\quad \le
		\frac1n \sum_{\bsh \in (\ZZ \setminus \{0\})^d} r^{-1}(\bsh)  
		+
		\frac1n \sum_{\substack{\bsh \in (\ZZ \setminus \{0\})^d \\ h_s \not\equiv 0 \pmod{n}}} r^{-1}(\bsh)  
		\le
		\frac2n \sum_{\bsh \in (\ZZ \setminus \{0\})^d} r^{-1}(\bsh)
		\,.
		\end{align*}
		Which completes the proof.
	\end{proof}
	
	\begin{theorem} \label{thm:ErrBouKor}
		Let $n$ be a prime number and $\bsz^0 = (z_1^0,\ldots,z_d^0) \in \bbZ_n^d$ be an arbitrary initial vector.
		Furthermore, denote by $\bsz^{*} = (z_1^{*},\ldots,z_d^{*})  \in \bbZ_n^d$ the generating vector constructed by the successive
		coordinate search method with initial vector $\bsz^0$. Then the squared worst-case error $e_{n,d}^2(\bsz^*)$ in the Korobov space with kernel $K_{d,\bsgamma,\bsbeta}$ satisfies
		\begin{align*}
		e_{n,d}^2(\bsz^{*})
		&\leq 
		C_{d,\lambda} \, n^{- \lambda} \quad \text{for all} \quad 1 \leq \lambda < 2\alpha 
		\, ,
		\end{align*}
		where the constant $C_{d,\lambda}$ is given by
		\begin{align*}
		C_{d,\lambda}
		&=
		2^\lambda
		\left(
		\sum_{j=1}^d 
		\frac{\gamma_j^{1/\lambda}}{\beta_j^{1/\lambda}}
		\right)^\lambda
		\prod_{j = 1}^d
		\left(
		\beta_j^{1/\lambda}+
		\gamma_j^{1/\lambda}
		\mu_{\alpha,\lambda}
		\right)^\lambda
		\mu_{\alpha,\lambda}^\lambda
		\max_{s=1,\ldots,d} \left(
		1+
		\frac{\gamma_s^{1/\lambda}}{\beta_s^{1/\lambda}}
		\mu_{\alpha,\lambda}
		\right)^{-\lambda}
		\end{align*}
		with $\mu_{\alpha,\lambda} = 2\zeta(2\alpha/\lambda) \,. $
		Additionally, if the weights satisfy the summability condition
		\begin{align*}
		\sum_{j=1}^{\infty} \frac{\gamma_j^{\,1/\lambda}}{\beta_j^{\,1/\lambda}} < \infty
		\end{align*}
		then $C_{d,\lambda} \le C_\lambda < \infty$, and the constant $C_\lambda$ is bounded independent of the dimension~$d$. 
		Hence, the worst-case error $e_{n,d}(\bsz^{*})$ can be taken arbitrarily close to $O(n^{-\alpha})$, with the implied constant independent of $n$, and independent of $d$ if the summability condition holds.
	\end{theorem}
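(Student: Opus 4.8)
The plan is to reduce the bound, via the standard dual‑lattice identity, to a coordinate‑by‑coordinate recursion for the output vector itself, and then reorganise. Expanding the product in the worst‑case error formula and using orthogonality of the characters $k\mapsto\mathrm{e}^{2\pi\mathrm{i}k\,\bsh\cdot\bsz/n}$ exactly as in the proof of Lemma~\ref{lem:averaging}, one has in the Korobov space with kernel $K_{d,\bsgamma,\bsbeta}$
\[
e_{n,d}^2(\bsz)=\sum_{\substack{\bsh\in\ZZ^d\setminus\{\bszero\}\\ \bsh\cdot\bsz\equiv0\ (\operatorname{mod}n)}}\ \prod_{j=1}^d\rho_j(h_j),\qquad \rho_j(0)=\beta_j,\quad \rho_j(h)=\frac{\gamma_j}{r_\alpha(h)}\ \text{ for }h\neq0.
\]
Write $\bsz^{*}=(z_1^{*},\dots,z_d^{*})$ for the SCS output and put $a_s:=\bigl(e_{n,s}^2(z_1^{*},\dots,z_s^{*})\bigr)^{1/\lambda}$ for the first $s$ chosen components, with $a_0=0$; the goal is $a_d\le\tfrac1n\,C_{d,\lambda}^{1/\lambda}$, which by raising to the power $\lambda$ is exactly the claimed bound.

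Next I would establish the recursion $a_s\le\beta_s^{1/\lambda}a_{s-1}+c_s'$ with an explicit $c_s'$. Splitting $e_{n,d}^2(z_1^{*},\dots,z_{s-1}^{*},z_s,z_{s+1}^0,\dots,z_d^0)$ according to whether $h_s=0$ shows it equals $\beta_s\cdot(\text{a term not depending on }z_s)+\Theta_s(z_s)$, where $\Theta_s(z_s)=\gamma_s\sum_{h_s\neq0}r_\alpha(h_s)^{-1}\sum_{\bsh\cdot\bsz\equiv0}\prod_{j\neq s}\rho_j(h_j)\ge0$, so the SCS step selects $z_s^{*}$ minimising $\Theta_s$. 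Isolating inside $\Theta_s$ the sub‑sum over $\bsh$ supported in $\{1,\dots,s\}$ and discarding the remaining non‑negative terms gives $\bigl(\prod_{t>s}\beta_t\bigr)\Theta^{(s)}(z_s)\le\Theta_s(z_s)$, where $\Theta^{(s)}(z_s)$ is the $z_s$‑dependent part of $e_{n,s}^2(z_1^{*},\dots,z_{s-1}^{*},z_s)$, so that $e_{n,s}^2(z_1^{*},\dots,z_s^{*})=\beta_s\,e_{n,s-1}^2(z_1^{*},\dots,z_{s-1}^{*})+\Theta^{(s)}(z_s^{*})$. Using ``minimum $\le$ average'' over $z_s\in\bbZ_n$, then Jensen and subadditivity of $t\mapsto t^{1/\lambda}$ (valid since $\lambda\ge1$), and finally Lemma~\ref{lem:averaging} applied support‑by‑support with $r=r_{\alpha/\lambda}$ (which satisfies $r_{\alpha/\lambda}(nh)=n^{2\alpha/\lambda}r_{\alpha/\lambda}(h)\ge n\,r_{\alpha/\lambda}(h)$ because $\lambda<2\alpha$), one obtains
\[
\Theta^{(s)}(z_s^{*})^{1/\lambda}\ \le\ \Bigl(\prod_{t>s}\beta_t\Bigr)^{-1/\lambda}\Theta_s(z_s^{*})^{1/\lambda}\ \le\ \frac{2\,\gamma_s^{1/\lambda}\,\mu_{\alpha,\lambda}}{n}\ \prod_{j<s}\bigl(\beta_j^{1/\lambda}+\gamma_j^{1/\lambda}\mu_{\alpha,\lambda}\bigr)\prod_{j>s}\Bigl(1+\tfrac{\gamma_j^{1/\lambda}}{\beta_j^{1/\lambda}}\mu_{\alpha,\lambda}\Bigr)=:c_s',
\]
with $\mu_{\alpha,\lambda}=2\zeta(2\alpha/\lambda)=\sum_{h\neq0}|h|^{-2\alpha/\lambda}$; subadditivity then gives $a_s\le\beta_s^{1/\lambda}a_{s-1}+c_s'$.

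Unrolling from $a_0=0$ yields $a_d\le\sum_{s=1}^d\bigl(\prod_{t>s}\beta_t^{1/\lambda}\bigr)c_s'$, and since $\bigl(\prod_{t>s}\beta_t^{1/\lambda}\bigr)\prod_{j>s}\bigl(1+\gamma_j^{1/\lambda}\mu_{\alpha,\lambda}/\beta_j^{1/\lambda}\bigr)=\prod_{j>s}\bigl(\beta_j^{1/\lambda}+\gamma_j^{1/\lambda}\mu_{\alpha,\lambda}\bigr)$ the extra factors cancel and
\[
a_d\ \le\ \frac{2\mu_{\alpha,\lambda}}{n}\sum_{s=1}^d\gamma_s^{1/\lambda}\prod_{j\neq s}\bigl(\beta_j^{1/\lambda}+\gamma_j^{1/\lambda}\mu_{\alpha,\lambda}\bigr)\ =\ \frac{2\mu_{\alpha,\lambda}}{n}\Bigl(\prod_{j=1}^d\bigl(\beta_j^{1/\lambda}+\gamma_j^{1/\lambda}\mu_{\alpha,\lambda}\bigr)\Bigr)\sum_{s=1}^d\frac{\gamma_s^{1/\lambda}/\beta_s^{1/\lambda}}{1+(\gamma_s^{1/\lambda}/\beta_s^{1/\lambda})\,\mu_{\alpha,\lambda}}.
\]
Bounding each summand by $\tfrac{\gamma_s^{1/\lambda}/\beta_s^{1/\lambda}}{1+\min_t(\gamma_t^{1/\lambda}/\beta_t^{1/\lambda})\,\mu_{\alpha,\lambda}}$ and recognising $\bigl(1+\min_t(\gamma_t^{1/\lambda}/\beta_t^{1/\lambda})\mu_{\alpha,\lambda}\bigr)^{-1}=\max_t\bigl(1+(\gamma_t^{1/\lambda}/\beta_t^{1/\lambda})\mu_{\alpha,\lambda}\bigr)^{-1}$ gives $a_d\le\tfrac1n C_{d,\lambda}^{1/\lambda}$, i.e. $e_{n,d}^2(\bsz^{*})\le C_{d,\lambda}n^{-\lambda}$ for every $1\le\lambda<2\alpha$, with $C_{d,\lambda}$ exactly as stated; the estimate is manifestly independent of $\bsz^0$. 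For the dimension‑independent claim I would first pass to $\widehat\beta_j=1$, $\widehat\gamma_j=\gamma_j/\beta_j$ via Remark~\ref{remark:normalized}, after which $\sum_j\gamma_j^{1/\lambda}/\beta_j^{1/\lambda}<\infty$ forces the sum, the maximum, and the infinite product $\prod_j(1+\gamma_j^{1/\lambda}\mu_{\alpha,\lambda}/\beta_j^{1/\lambda})$ all to be finite, so $C_{d,\lambda}\le C_\lambda<\infty$; letting $\lambda\uparrow2\alpha$ gives the order $O(n^{-\alpha+\delta})$ for every $\delta>0$.

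The step that I expect to require the most care is the passage from the genuinely $d$‑dimensional minimisation performed by SCS to the $s$‑dimensional recursion: at step $s<d$ the algorithm minimises the full worst‑case error with coordinates $s+1,\dots,d$ frozen at the arbitrary values $z_{s+1}^0,\dots,z_d^0$, so $z_s^{*}$ is only a near‑optimal choice for the $s$‑dimensional sub‑problem, and this discrepancy is precisely what produces the ``contamination'' factors $\prod_{j>s}(1+\gamma_j^{1/\lambda}\mu_{\alpha,\lambda}/\beta_j^{1/\lambda})$ in $c_s'$. The crucial observation is that the inequality $\bigl(\prod_{t>s}\beta_t\bigr)\Theta^{(s)}\le\Theta_s$ loses exactly these factors and that they telescope away against the $\beta$‑powers appearing in the unrolling, leaving a bound of the same shape as—and in fact slightly sharper than—the classical CBC bound. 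The remaining points are routine: verifying the applicability of Lemma~\ref{lem:averaging} for $r_{\alpha/\lambda}$ on each support set, and the elementary inequality $\sum_s\frac{x_s}{1+x_s\mu}\le\frac{\sum_s x_s}{1+(\min_s x_s)\mu}$ used in the reorganisation.
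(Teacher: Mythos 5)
Your proposal is correct and, despite the different packaging, follows essentially the same route as the paper: the four key ingredients (the comparison of the $z_s$-dependent part of the full $d$-dimensional error with the corresponding $s$-dimensional quantity, ``minimum $\le$ average'' over $z_s\in\bbZ_n$, Jensen's inequality for $1\le\lambda<2\alpha$, and the averaging Lemma~\ref{lem:averaging} applied support-by-support) appear in exactly the same roles and yield the identical constant $C_{d,\lambda}$, independently of $\bsz^0$. The only differences are bookkeeping: you keep general $\beta_j$ and thread the $1/\lambda$-powers through a CBC-style recursion on the partial errors $e_{n,s}^2(z_1^*,\dots,z_s^*)$, whereas the paper normalizes to $\beta_j=1$ and bounds $e_{n,d}^2(\bsz^*)=\sum_{s}T_s$ directly by $\sum_s\theta_s$ with the tail components frozen at $\bsz^0$ --- unrolling your recursion reproduces precisely that decomposition.
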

	
	\begin{proof}
		We use the notation from \cite{Nuyens:2014}:
		for a subset $\setu \subseteq \{1\mathpunct{:}d\} = \{1,\ldots,d\}$, 
		we define the set $\ZZ_\setu = \{ \bsh \in \ZZ^d : h_j \ne 0 \text{ for all } j \in \setu \text{ and } h_j = 0 \text{ for } j \not\in \setu \}$, 
		and define the dual lattice 
		$L^\perp_\setu(\bsz,n) = L^\perp_\setu(\bsz_\setu, n) = \{ \bsh \in \ZZ_\setu : \bsh_\setu \cdot \bsz_\setu \equiv 0 \pmod{n} \}$ where we write $\bsz_\setu$ and $\bsh_\setu$ to refer only to those components in $\bsz$ and $\bsh$.
		For $\bsh \in \bbZ_\setu$ we will write $\bsh_\setu \in \bbZ_\setu$ and $r_\alpha(\bsh_\setu)$ to explicitly denote the dependence on the dimensions in $\setu$ only. We also write $\gamma_u = \prod_{j \in u} \gamma_j$ and set $\gamma_\emptyset = 1$.
		Now, without loss of generality, we consider the case where $\beta_j=1$ for all $j$ and correct the final expression afterwards, see Remark~\ref{remark:normalized}. Then from~\eqref{eq:normalized}, or, see, e.g., \cite[p.~5, eq.~(6)]{Nuyens:2014} with $q=2$ and $\varphi(\bsx_k) = \mathrm{e}^{2\pi \mathrm{i} \, k \, \bsh \cdot \bsz/n }$, we have
		\begin{align*}
		e_{n,d}^2(\bsz)
		=
		\sum_{\bszero \neq \bsh \in \Z^d} 
		\left| \frac{1}{n} \sum_{k=0}^{n-1} \mathrm{e}^{2\pi \mathrm{i} \, k \, \bsh \cdot \bsz/n }\right|^2 
		r_{\alpha}^{-1}(\bsh) \prod_{\substack{j=1\\h_j\ne0}}^d \gamma_j 
		&=
		\sum_{\emptyset \ne \setu \subseteq \{1\mathpunct{:}d\}}
		\gamma_\setu
		\sum_{\bsh_\setu \in L^\perp_\setu(\bsz_\setu,n)} r_\alpha^{-1}(\bsh_\setu) \, .
		\end{align*}
		Now define
		\begin{align*}
		g_\setu(\bsz_\setu) 
		&=
		\gamma_\setu
		\sum_{\bsh_\setu \in L^\perp_\setu(\bsz_\setu,n)} r_\alpha^{-1}(\bsh_\setu) 
		\qquad \text{and} \qquad
		T_s(z_1,\ldots,z_s) = \sum_{s \in \setu \subseteq \{1\mathpunct{:}s\}} g_\setu(\bsz_\setu)  
		\,,
		\end{align*}
		which gives
		\begin{align*}
		e_{n,d}^2(\bsz)
		&=
		\sum_{\emptyset \ne \setu \subseteq \{1\mathpunct{:}d\}} g_\setu(\bsz_\setu)
		= \sum_{s=1}^{d} \sum_{s \in \setu \subseteq \{1\mathpunct{:}s\}} g_\setu(\bsz_\setu) 
		= \sum_{s=1}^d T_s(z_1,\ldots,z_s)
		\, .
		\end{align*}
		Minimizing $e_{n,d}(\bsz)$ over $z_s \in \bbZ_n$
		is equivalent to minimizing only those parts which depend on $z_s$, resulting in the auxiliary target function
		\begin{align*}
		\theta_{s}(\bsz)
		&=
		\sum_{s \in \setu \subseteq \{1\mathpunct{:}d\}} g_\setu(\bsz_\setu)
		=
		\sum_{s \in \setu \subseteq \{1\mathpunct{:}d\}} \gamma_\setu
		\sum_{\bsh_\setu \in L^\perp_\setu(\bsz_\setu,n)} r_\alpha^{-1}(\bsh_\setu) 
		\,.
		\end{align*}
		We note that in the standard CBC proofs this quantity only depends on the dimensions up to $s$ while here it depends on all $d$~dimensions.
		Obviously
		\begin{align*}
		e_{n,d}^2(\bsz)
		&= 
		\sum_{s=1}^d T_s(z_1,\ldots,z_s)
		=
		\sum_{s=1}^{d} \sum_{s \in \setu \subseteq \{1\mathpunct{:}s\}} g_\setu(\bsz_\setu) 
		\le
		\sum_{s=1}^{d} \sum_{s \in \setu \subseteq \{1\mathpunct{:}d\}} g_\setu(\tilde{\bsz}_\setu) 
		=
		\sum_{s=1}^{d} \theta_s(\tilde{\bsz}) 
		\, ,
		\end{align*}
		where the tilde on top of $\bsz$ means that in replacing the sum over $s \in \setu \subseteq \{1\mathpunct{:}s\}$ by the sum over $s \in \setu \subseteq \{1\mathpunct{:}d\}$ we choose arbitrary $z_j$ for $j > s$. We are free to do so since we are just adding positive quantities. Furthermore, for $1 \le \lambda < \infty$, using the so-called Jensen's inequality, we obtain
		{\allowdisplaybreaks
			\begin{align*}
			\left(e_{n,d}^2(\bsz)\right)^{1/\lambda}
			=
			\left(\sum_{s=1}^{d} \sum_{s \in \setu \subseteq \{1\mathpunct{:}s\}} g_\setu(\bsz_\setu)\right)^{1/\lambda}
			&\le
			\left(\sum_{s=1}^{d} \sum_{s \in \setu \subseteq \{1\mathpunct{:}d\}} g_\setu(\tilde{\bsz}_\setu)\right)^{1/\lambda}
			\\
			&=
			\left(\sum_{s=1}^{d}  \theta_s(z_1,\ldots,z_{s-1},z_s,w_{s+1},\ldots,w_d) \right)^{1/\lambda}
			\\
			&\le
			\sum_{s=1}^{d}  \theta^{1/\lambda}_s(z_1,\ldots,z_{s-1},z_s,w_{s+1},\ldots,w_d)
			\, ,
			\end{align*}}
		which holds for all choices of $\bsw$.
		Since in minimizing $e^2_{n,d}(z_1^*,\ldots,z_{s-1}^*,z_s,z_{s+1}^0,\ldots,z_d^0)$ we are in fact minimizing $\theta_{s}(z_1^{*},\ldots,z_{s-1}^{*},z_s,z_{s+1}^{0},\ldots,z_{d}^{0})$ we now use the standard reasoning 
		that the best choice $z_s = z_s^*$ makes $\theta_s$ at least as small as the average over all choices, and 
		the same reasoning holds if we raise $\theta_s$ to the power $1/\lambda$. Therefore we obtain
		\begin{align*}
		&\theta_{s}^{1/\lambda}(z_1^{*},\ldots,z_{s-1}^{*},z_s^*,z_{s+1}^{0},\ldots,z_{d}^{0})
		\le
		\frac1{n} \sum_{z_s \in \ZZ_n} \theta_{s}^{1/\lambda}(z_1^{*},\ldots,z_{s-1}^{*},z_s,z_{s+1}^{0},\ldots,z_{d}^{0})
		\\
		&\quad \leq
		\sum_{s \in \setu \subseteq \{1\mathpunct{:}d\}} 
		\gamma_\setu^{1/\lambda}
		\frac1n \sum_{z_s \in \ZZ_n} 
		\sum_{\bsh_\setu \in L^\perp_\setu(\bar{\bsz}_\setu,n)} r_\alpha^{-1/\lambda}(\bsh_\setu) 
		\leq
		\frac2n \sum_{s \in \setu \subseteq \{1\mathpunct{:}d\}} 
		\gamma_\setu^{1/\lambda}
		\sum_{\bsh_\setu \in \Z_{\setu}} r_\alpha^{-1/\lambda}(\bsh_\setu)
		\, ,
		\end{align*}
		where we used Jensen's inequality to obtain the second line (and where $\bar{\bsz}_\setu$ means we take $\bar\bsz = (z_1^*$,\ldots,$z_{s-1}^*,z_s,z_{s+1}^0,\ldots,z_{s+1}^d)$) and Lemma~\ref{lem:averaging}, relabeling the set $\setu$ to be $\{1,\ldots,|\setu|\}$, $d=|\setu|$, and with $r(h) = |h|^{2\alpha/\lambda}$ and $c=2\alpha/\lambda \ge 1$, to obtain the last line.
		For convenience we define
		\begin{align*}
		\mu_{\alpha,\lambda}
		=
		\sum_{0 \ne h \in \Z} r_\alpha^{-1/\lambda}(h)
		=
		2 \sum_{h=1}^\infty h^{-2\alpha/\lambda}
		=
		2 \zeta(2\alpha/\lambda)
		<
		\infty
		\,
		\end{align*}
		from which it follows that $2\alpha/\lambda > 1$ and we thus need $\lambda < 2\alpha$.
		Since for $\bsh_\setu \in \Z_\setu$ we have $r_\alpha^{-1/\lambda}(\bsh_\setu) = \prod_{j\in\setu} r_\alpha^{-1/\lambda}(h_j)$
		we find $\sum_{\bsh_\setu \in \Z_{\setu}} r_\alpha^{-1/\lambda}(\bsh_\setu) = \mu_{\alpha,\lambda}^{|\setu|}$.
		
		In each step of the SCS algorithm we now have a bound on $\theta_s^{1/\lambda}$ which we insert in our bound for the worst-case error, each time choosing the components of $\bsz^0$ for $\bsw$, to obtain
		\begin{align*}
		\left(e_{n,d}^2(\bsz^{*})\right)^{1/\lambda}
		&\le
		\sum_{s=1}^d \theta_s^{1/\lambda}(z_1^*,\ldots,z_s^*,z_{s+1}^0,\ldots,z_d^0)
		\le
		\frac2n \sum_{s=1}^d \sum_{s \in \setu \subseteq \{1\mathpunct{:}d\}} 
		\gamma_\setu^{1/\lambda}
		\mu_{\alpha,\lambda}^{|\setu|}
		\\
		&=
		\frac2n \sum_{s=1}^d 
		\left(
		\sum_{\setu \subseteq \{1\mathpunct{:}d\} \setminus \{s\}} 
		\gamma_\setu^{1/\lambda}
		\mu_{\alpha,\lambda}^{|\setu|}
		\right)
		\left(
		\gamma_s^{1/\lambda}
		\mu_{\alpha,\lambda}
		\right)
		\\
		&\le
		\frac2n
		\left(
		\sum_{s=1}^d 
		\gamma_s^{1/\lambda}
		\right)
		\mu_{\alpha,\lambda}
		\max_{s=1,\ldots,d} \left(
		\sum_{\setu \subseteq \{1\mathpunct{:}d\} \setminus \{s\}} 
		\gamma_\setu^{1/\lambda}
		\mu_{\alpha,\lambda}^{|\setu|}
		\right)
		\\
		&=
		\frac2n
		\left(
		\sum_{s=1}^d 
		\gamma_s^{1/\lambda}
		\right)
		\mu_{\alpha,\lambda}
		\max_{s=1,\ldots,d} \left(
		\prod_{s \ne j = 1}^d
		\left(
		1+
		\gamma_j^{1/\lambda}
		\mu_{\alpha,\lambda}
		\right)
		\right)
		\\
		&=
		\frac2n
		\left(
		\sum_{s=1}^d 
		\gamma_s^{1/\lambda}
		\right)
		\mu_{\alpha,\lambda}
		\max_{s=1,\ldots,d} \left(
		\frac{
			\prod_{j = 1}^d
			\left(
			1+
			\gamma_j^{1/\lambda}
			\mu_{\alpha,\lambda}
			\right)
		}{
		1+
		\gamma_s^{1/\lambda}
		\mu_{\alpha,\lambda}
	}
	\right)
	\\
	&=
	\frac2n
	\left(
	\sum_{s=1}^d 
	\gamma_s^{1/\lambda}
	\right)
	\prod_{j = 1}^d
	\left(
	1+
	\gamma_j^{1/\lambda}
	\mu_{\alpha,\lambda}
	\right)
	\mu_{\alpha,\lambda}
	\max_{s=1,\ldots,d} \left(
	1+
	\gamma_s^{1/\lambda}
	\mu_{\alpha,\lambda}
	\right)^{-1}
	\, .
	\end{align*}
	
	To show that the summability condition $\sum_{j=1}^\infty \gamma_j^{1/\lambda} < \infty$ gives a bound independent of $d$ we note that
	\begin{align*}
	\prod_{j = 1}^d \left(1 + \gamma_j^{1/\lambda} \mu_{\alpha,\lambda} \right)
	< 
	\infty
	\quad
	\text{if and only if}
	\quad
	\log\left( \prod_{j = 1}^d \left(1 + \gamma_j^{1/\lambda} \mu_{\alpha,\lambda} \right) \right)
	<
	\infty
	\,.
	\end{align*}
	Now using that $\log(1 + x) \le x$ for $x > -1$, we find that
	\begin{align*}
	\log\left( \prod_{j = 1}^d \left(1 + \gamma_j^{1/\lambda} \mu_{\alpha,\lambda} \right) \right)
	=
	\sum_{j=1}^d \log \left(1 + \gamma_j^{1/\lambda} \mu_{\alpha,\lambda} \right)
	\le
	\mu_{\alpha,\lambda} \sum_{j=1}^d \gamma_j^{1/\lambda}
	\le
	\mu_{\alpha,\lambda} \sum_{j=1}^\infty \gamma_j^{1/\lambda}
	,
	\end{align*}
	which implies the result.
\end{proof}

\subsection{The weighted Sobolev space}		

Again, let $\bsgamma=\{\gamma_j\}$ and $\bsbeta = \{\beta_j\}$ be two weight sequences.
There is a close relationship between the weighted Korobov space with smoothness parameter $\alpha = 1$ and the shift-averaged weighted Sobolev space. The shift-invariant kernel of the weighted Sobolev space with anchor $\bsa = (a_1,\ldots,a_d)$ of $d$-variate functions is given by
\begin{align*}
K_{d,\bsgamma,\bsbeta}^{*}(\bsx,\bsy) 
&=  
\prod_{j=1}^d \left( \hat{\beta}_j + \hat{\gamma}_j \sum_{0 \ne h=-\infty}^{\infty}
\frac{\mathrm{e}^{2\pi \mathrm{i} \, h (x_j - y_j)}}{|h|^2} \right)\, ,
\end{align*}
where $\hat{\beta}_j = \beta_j + \gamma_j \left(a_j^2 - a_j + \frac{1}{3}\right)$ and $\hat{\gamma}_j = \frac{\gamma_j}{2\pi^2}$ for anchor values $a_j \, $. Furthermore, for $c_j = a_j^2 - a_j + \frac{1}{3}$ the shift-averaged squared worst-case error $\hat{e}_{n,d}^2(\bsz)$ with generating vector $\bsz$ takes the following form	
\begin{align*}
\hat{e}_{n,d}^2(\bsz) 
&=
-\prod_{j=1}^{d} \left(\beta_j + \gamma_j \, c_j \right) + \frac{1}{n} \sum_{k=0}^{n-1} \prod_{j=1}^d 
\left( \beta_j + \gamma_j \left[B_2\left(\left\{\frac{k \, z_j}{n}\right\}\right) + c_j \right] \right) \\ 
&=
- \prod_{j=1}^{d} \hat{\beta}_j + \frac{1}{n} \sum_{k=0}^{n-1} \prod_{j=1}^d \left( \hat{\beta}_j + \hat{\gamma}_j \sum_{0 \ne h=-\infty}^{\infty} \frac{\mathrm{e}^{2\pi \mathrm{i} \, k h z_j/n }}{h^2} \right) \, .
\end{align*}
Additionally, the initial worst-case error in the weighted Sobolev space is given by
\begin{align*}
\hat{e}_{0,d}(0,K_{d,\bsgamma,\bsbeta}) 
&= 
\prod_{j=1}^d \hat\beta_j^{1/2}
=
\prod_{j=1}^{d} \left(\beta_j + \gamma_j\left(a_j^2 - a_j + \frac{1}{3}\right)\right)^{1/2} \, .
\end{align*}

Since these are precisely the worst-case error expressions as for the weighted Korobov space with $\alpha = 1$ and weights $\hat{\beta_j}$ 
and $\hat{\gamma_j}$, we obtain similar error bounds as before. \\

\begin{theorem}  \label{thm:ErrBouSob}
	Let $n$ be a prime number and $\bsz^0 = (z_1^0,\ldots,z_d^0) \in \bbZ_n^d$ be an arbitrary initial vector.
	Furthermore, denote by $\bsz^{*} = (z_1^{*},\ldots,z_d^{*})  \in \bbZ_n^d$ the generating vector constructed by the successive coordinate search method with initial vector $\bsz^0$. Then the squared worst-case error $\hat{e}^2_{n,d}(\bsz^{*})$ in the shift-averaged (anchored) Sobolev space with kernel $K^*_{d,\bsgamma,\bsbeta}$ satisfies
	\begin{align*}
	\hat{e}_{n,d}^2(\bsz^{*})
	&\leq 
	\hat{C}_{d,\lambda} \, n^{- \lambda} \quad \text{for all} \quad 1 \leq \lambda < 2
	\, ,
	\end{align*}
	where the constant $\hat{C}_{d,\lambda}$ is given by the expression for $C_{d,\lambda}$ from Theorem~\ref{thm:ErrBouKor} with $\alpha = 1$ and weights $\hat{\beta_j}=\beta_j + \gamma_j \left(a_j^2 - a_j + \frac{1}{3}\right)$ and $\hat{\gamma}_j = \frac{\gamma_j}{2\pi^2}$.
	
	Additionally, if the weights satisfies the summability condition
	\begin{align*}
	\sum_{j=1}^{\infty} \frac{\hat\gamma_j^{\,1/\lambda}}{\hat\beta_j^{\,1/\lambda}} < \infty
	\end{align*}
	then $\hat{C}_{d,\lambda} \le \hat{C}_\lambda < \infty$, and the constant $C_\lambda$ is bounded independent of the dimension~$d$. 
	Hence, the worst-case error $\hat{e}_{n,d}(\bsz^{*})$ can be taken arbitrarily close to $O(n^{-1})$, with the implied constant independent of $n$, and independent of $d$ if the summability condition holds.
\end{theorem}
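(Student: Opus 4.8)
The proof is a reduction to Theorem~\ref{thm:ErrBouKor}. The plan is to observe that, as a function of the generating vector $\bsz$, the shift-averaged squared worst-case error in the anchored Sobolev space is identical to the squared worst-case error in a weighted Korobov space of smoothness $\alpha=1$, and then to invoke Theorem~\ref{thm:ErrBouKor} with no further work.

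First I would record that the Fourier expansion $\sum_{0\ne h\in\Z} \mathrm{e}^{2\pi\mathrm{i} h t}/|h|^2 = 2\pi^2 B_2(\{t\})$, together with the second displayed formula for $\hat{e}_{n,d}^2(\bsz)$ preceding the theorem, shows that
\[
\hat{e}_{n,d}^2(\bsz)
=
-\prod_{j=1}^d \hat\beta_j
+
\frac1n \sum_{k=0}^{n-1} \prod_{j=1}^d
\left( \hat\beta_j + \hat\gamma_j \sum_{0\ne h \in \Z} \frac{\mathrm{e}^{2\pi\mathrm{i} k h z_j/n}}{r_1(h)} \right),
\]
which is exactly the squared worst-case error $e_{n,d}^2(\bsz)$ in the Korobov space with kernel $K_{d,\hat{\bsgamma},\hat{\bsbeta}}$, smoothness parameter $\alpha = 1$, and weights $\hat\beta_j = \beta_j + \gamma_j(a_j^2 - a_j + \tfrac13)$, $\hat\gamma_j = \gamma_j/(2\pi^2)$. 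I would also note that these are admissible positive weights: the quadratic $a_j^2 - a_j + \tfrac13$ has negative discriminant, hence is strictly positive, so $\hat\beta_j > 0$, and clearly $\hat\gamma_j > 0$; moreover $\hat{e}_{0,d} = \prod_j \hat\beta_j^{1/2}$ matches the initial worst-case error of that Korobov space.

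Next, since each step of the SCS algorithm selects the component minimizing $\hat{e}_{n,d}^2(z_1^*,\ldots,z_{s-1}^*,z_s,z_{s+1}^0,\ldots,z_d^0)$, and this is the very same function of $\bsz$ as the Korobov squared worst-case error with the hatted weights, the vector $\bsz^*$ produced by the SCS algorithm here coincides with the one produced by the SCS algorithm on the Korobov space with $\alpha = 1$ and weights $\hat{\bsgamma}, \hat{\bsbeta}$. Theorem~\ref{thm:ErrBouKor} applied with those parameters then yields directly
\[
\hat{e}_{n,d}^2(\bsz^*) \le \hat{C}_{d,\lambda}\, n^{-\lambda}
\qquad \text{for all } 1 \le \lambda < 2\alpha = 2,
\]
with $\hat{C}_{d,\lambda}$ the expression for $C_{d,\lambda}$ in which $\alpha = 1$ (so $\mu_{1,\lambda} = 2\zeta(2/\lambda)$) and $\gamma_j,\beta_j$ are replaced by $\hat\gamma_j,\hat\beta_j$. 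The dimension-independence claim is inherited verbatim: Theorem~\ref{thm:ErrBouKor} already shows that $\sum_{j=1}^\infty \hat\gamma_j^{1/\lambda}/\hat\beta_j^{1/\lambda} < \infty$ implies $\hat{C}_{d,\lambda} \le \hat{C}_\lambda < \infty$ uniformly in $d$, so $\hat{e}_{n,d}(\bsz^*)$ is arbitrarily close to $O(n^{-1})$ with constant independent of $n$, and of $d$ under the summability condition.

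The argument presents no real obstacle; the only point deserving a moment's attention is the identification of the SCS minimization problem across the two descriptions, and this is immediate because the two objective functions are literally equal as functions of $\bsz \in \bbZ_n^d$. Alternatively one could simply re-run the proof of Theorem~\ref{thm:ErrBouKor} with $\omega(t) = 2\pi^2 B_2(\{t\})$ and the hatted weights, but the reduction makes that superfluous.
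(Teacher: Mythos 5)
Your proposal is correct and follows essentially the same route as the paper, which likewise reduces the Sobolev case to Theorem~\ref{thm:ErrBouKor} via the identity between the shift-averaged worst-case error and the Korobov ($\alpha=1$) worst-case error with weights $\hat\beta_j$ and $\hat\gamma_j$; you merely spell out the details (the $B_2$ Fourier identity, positivity of the hatted weights, and the identification of the SCS minimization) that the paper leaves implicit.
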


\begin{proof}
	The theorem follows directly from the previous result in Theorem \ref{thm:ErrBouKor}.
\end{proof}


\section{Numerical results and experiments}\label{sec:numerics}

The idea regarding the SCS algorithm is to obtain generating vectors with smaller error values than obtained by the CBC algorithm, provided we choose a suitable initial vector $\bsz^0 \in \bbZ_n^d$. The formulation of the algorithm suggests that the performance of the SCS construction strongly depends on the starting vector $\bsz^0$ which we select beforehand. In this section we conduct some numerical experiments in the same setting as for the CBC algorithm in order to assess the performance of the SCS algorithm.
For the experiments we prefer to have $n$ distinct points in each dimension and so restrict our generating vector choices to exclude the choice $z_s=0$ for the components of $\bsz$ for prime $n$, i.e., $\bsz \in \{1,\ldots,n-1\}^d$. Allowing the choice $z_s=0$ has effect on the results which depend on the weights since the CBC algorithm can now pick a zero component when the weights $\gamma_j$ decay too slow.

\subsection{Construction methods}

As we do not know how to best choose the initial vectors for the SCS algorithm, we propose 
to start from randomly selected initial vectors. This is different from the randomized  CBC construction, see, e.g., 
\cite{L'Ecuyer_theory:2009}, where in each minimization step the number of possible candidates $z_s$ is restricted to $r$ random integers in $\{1,\ldots,n-1\}$. We consider the following two methods.\\ \\
\textbf{1. Uniform random vectors + SCS algorithm:}
Choose $q$ initial vectors $\bsz^0 \in \bbZ_n^d$ at random, apply the fast SCS algorithm to them and then select the one with the smallest worst-case error $e_{n,d}(\bsz)$. \\ \\
\textbf{2. Korobov-type generating vector + SCS algorithm:}
Take $q$ randomly chosen Korobov-type generating vectors $\bsz^0 = \bsz(a) \equiv (a^0,a^1,\ldots,a^{d-1}) \pmod{n}$, with $a \in \{1,\ldots,n-1\}$, as initial vectors, apply the fast SCS algorithm to them and then select the one with the smallest worst-case error $e_{n,d}(\bsz)$. \\ \\
As the successive coordinate search algorithm has time complexity $O(d \, n \, \log(n))$, both proposed construction methods
have time complexity $O(q \, d \, n \, \log(n))$. 

\begin{remark}
	The obvious candidate for the initial vector $\bsz^0$ would of course be the generating vector constructed by the CBC method
	since by Proposition \ref{prop:ImproveSCS} one would construct $\bsz^1$ such that $e_{n,d}(\bsz^1) \leq e_{n,d}(\bsz^0)$.
	However, experiments show that in most cases the CBC vector is a fixed point with respect to the SCS method, i.e., applying 
	the SCS algorithm to the CBC vector $\bsz^0$ leaves the coordinates of $\bsz^0$ unchanged.  
	Thus, this approach yields usually no further improvement.
\end{remark}

\subsection{Exhaustive search in low dimensions}

In order to test the effectivity of our method we perform some numerical experiments in low dimensions and for a low number of points. 
Here, we can compute the best generating vector for the respective function space via an exhaustive search over the
full set $\bbZ_n^d$ and then compare its worst-case error to the error values of the generating vectors 
obtained by our method. \\ \\
For the weighted unanchored Sobolev space the squared worst-case error is given by
\begin{align*}
e_{n,d}^2(\bsz) 
&=
-\prod_{j=1}^{d} \beta_j + \frac{1}{n} \sum_{k=0}^{n-1} \prod_{j=1}^{d} \left( \beta_j + \gamma_j \, B_2\left( \left\{ 
\frac{k \, z_j}{n} \right\} \right) \right) \, ,
\end{align*}    
where $B_2(x) = x^2 - x +\frac{1}{6}$ denotes the Bernoulli polynomial of degree two. Furthermore, $\bsz_{\text{full}}$ denotes the
generating vector obtained by the full exhaustive search, $\bsz_{\text{cbc}}$ denotes the generating vector obtained via the 
component-by-component construction and $\bsz^*_{\text{rand}}$ and $\bsz^*_{\text{kor}}$ are the best generating vectors obtained out of $q=100$ initial random choices by the above construction methods~1 and~2, respectively.
For two different weight sequences $\gamma_j$ and a selection of prime $n$ we obtain the results in Tables~\ref{tbl:results1} and~\ref{tbl:results2}, where $\gamma_j = (0.95)^j$ and $\gamma_j = (0.7)^j$, respectively.
To be able to find the global minimum $\bsz_{\text{full}}$ over the whole set we limited the dimensionality to $d=5$ and the number of points to $n \le 199$.
This leads to exhaustive searches over about $6$ to $96$~million possible choices for $\bsz$, where we used the symmetry of the kernel and the fact that we only need to consider generating vectors with $z_1 = 1$ since multiplication by the multiplicative inverse of the first component normalizes any generating vector to have $z_1=1$ and this is just a reordering of the cubature nodes.

{\centering
\begin{minipage}{\linewidth}
\begin{table}[H]
	\caption{Weighted unanchored Sobolev space with $d=5, \beta_j=1, \gamma_j=(0.95)^j, q=100$}\label{tbl:results1}
	\centering
	\begin{tabular}{p{2cm}p{2cm}p{2.5cm}p{2cm}p{2cm}}
		\hline\noalign{\smallskip}	
		{$n$} & 
		$e_{n,d}(\bsz^*_{\text{kor}})$ & 
		{$e_{n,d}(\bsz^*_{\text{rand}})$} & 
		{$e_{n,d}(\bsz_{\text{cbc}})$} & 
		{$e_{n,d}(\bsz_{\text{full}})$} \\ \midrule
		101 & 2.6003e-02 & 2.6000e-02 & 2.6022e-02 & 2.6000e-02 \\
		127 & 2.1794e-02 & 2.1834e-02 & 2.2180e-02  & 2.1751e-02 \\
		139 & 2.0016e-02 & 2.0010e-02 & 2.0493e-02  & 1.9999e-02 \\ \midrule
		151 & 1.8886e-02 & 1.8893e-02 & 1.9175e-02  & 1.8843e-02 \\
		181 & 1.5963e-02 & 1.5937e-02 & 1.6453e-02  & 1.5928e-02 \\
		199 & 1.4813e-02 & 1.4808e-02 & 1.5368e-02  & 1.4802e-02 \\ \bottomrule
	\end{tabular}
\end{table}
\end{minipage}
\par
}

{\centering
\begin{minipage}{\linewidth}
\begin{table}[H]
	\caption{Weighted unanchored Sobolev space with $d=5, \beta_j=1, \gamma_j=(0.7)^j, q=100$}\label{tbl:results2}
	\centering
	\begin{tabular}{p{2cm}p{2cm}p{2.5cm}p{2cm}p{2cm}}
		\hline\noalign{\smallskip}	
		{$n$} & 
		$e_{n,d}(\bsz^*_{\text{kor}})$ &
		$e_{n,d}(\bsz^*_{\text{rand}})$ &
		{$e_{n,d}(\bsz_{\text{cbc}})$} & 
		{$e_{n,d}(\bsz_{\text{full}})$} \\ \midrule
		101 & 1.0721e-02 & 1.0695e-02 & 1.0878e-02 & 1.0695e-02 \\
		127 & 8.7079e-03 & 8.6296e-03 & 8.6700e-03  & 8.6275e-03 \\
		139 & 8.0567e-03 & 8.0439e-03 & 8.0724e-03 & 8.0439e-03 \\ \midrule
		151 & 7.4913e-03  & 7.4913e-03  & 7.5295e-03  & 7.4913e-03 \\
		181 & 6.26793e-03 & 6.2594e-03 & 6.3898e-03  & 6.2421e-03 \\
		199 & 5.7456e-03 & 5.7682e-03 & 5.8758e-03  & 5.7352e-03 \\ \bottomrule
	\end{tabular}
\end{table}
\end{minipage}
\par
}
\vspace{15pt}
The results in Table~1 and~2 show that, even for a moderate value of $q$, the randomized SCS method generates lattice rules which
have a smaller worst-case error than the one obtained via the CBC construction. Additionally, we see that our
method generates worst-case errors that lie in the region of the smallest worst-case error $e_{n,d}(\bsz_{\text{full}})$
and sometimes even constructs the best possible lattice rule.
Although we only show two small tables here, similar results were observed for other test cases as well.
In particular, we considered weight sequences of the form $\gamma_j = q^j$ with $0 < q < 1$ and $\gamma_j = j^{-k}$
with $k \in \{2,3,4\}$, for additional results see \cite{Master_thesis:2015}. The experiments showed that
the SCS algorithm outperforms the CBC construction when the decay of the weight sequence $\gamma_j$ is slow.

\subsection{Numerical experiments for higher dimensions}

In higher dimensions and/or for higher number of points it is not possible to perform an exhaustive search in order to obtain a reference value to measure the quality of the constructed generating vectors. Thus, we compare the outcome of the SCS method with
the generating vector constructed by the CBC algorithm. Additionally, the empirical numerical results suggested that the use of Korobov-type initial vectors is to be preferred over uniform random vectors and we will therefore only consider Korobov-type initial vectors in this section.
We denote by $\overline{e_{n,d}(\bsz_{\text{kor}})}$ the average over the $q$ random choices of the worst-case errors of the SCS constructed vectors $\bsz_{\text{kor}}$ and with $e_{n,d}(\bsz_{\text{kor}}^*)$ the best over the $q$ random choices.

\begin{table}[H]
	\caption{Weighted Korobov space with $d=100, \alpha=1, \beta_j=\frac{2}{3}, \gamma_j=\frac{2}{3} (0.95)^j, q=100$}\label{tbl:k100-1}
	\centering
	\begin{tabular}{p{2cm}p{2cm}p{2.5cm}p{2cm}}
		\hline\noalign{\smallskip}	
		{$n$} & {$\overline{e_{n,d}(\bsz_{\text{kor}})}$} & $e_{n,d}(\bsz_{\text{kor}}^*)$ & {$e_{n,d}(\bsz_{\text{cbc}})$} \\ \midrule
		1009 & 1.6554e-02 & 1.6221e-02 & 1.6566e-02 \\
		2003 & 1.1759e-02 &  1.1474e-02 & 1.1719e-02 \\
		4001 & 8.3025e-03 &  8.1204e-03 &  8.2869e-03 \\ 
		8009 & 5.8655e-03 & 5.7730e-03 & 5.8500e-03 \\
		32003 & 2.9320e-03 & 2.8874e-03 &  2.9301e-03 \\ \bottomrule
	\end{tabular}
\end{table}	

\begin{table}[H]
	\caption{Weighted Korobov space with $d=100, \alpha=1, \beta_j=1, \gamma_j=(0.7)^j, q=100$}\label{tbl:k100-2}
	\centering
	\begin{tabular}{p{2cm}p{2cm}p{2.5cm}p{2cm}}
		\hline\noalign{\smallskip}	
		{$n$} & {$\overline{e_{n,d}(\bsz_{\text{kor}})}$} & $e_{n,d}(\bsz_{\text{kor}}^*)$ & {$e_{n,d}(\bsz_{\text{cbc}})$} \\ \midrule
		1009 & 3.1185e-01 & 3.0834e-01 & 3.0931e-01 \\
		2003 & 2.0902e-01 &  2.0661e-01 & 2.0708e-01 \\
		4001 & 1.3894e-01 &  1.3713e-01 &  1.3658e-01 \\ 
		8009 & 9.1757e-02 & 9.0445e-02 & 8.9611e-02 \\
		32003 & 3.9467e-02 & 3.8763e-02 &  3.8528e-02 \\ \bottomrule
	\end{tabular}
\end{table}	

The numerical results presented in Tables~\ref{tbl:k100-1} and~\ref{tbl:k100-2} are for a Korobov space with $d=100, \alpha=1$ and two different choices of weights, being $\beta_j=\frac{2}{3}$ and $\gamma_j=\frac{2}{3} (0.95)^j$, and $\beta_j=1$ and $\gamma_j=(0.7)^j$, respectively, both with $q=100$ random initial Korobov-type vectors. Our experiments show that the SCS method can construct good lattice rules for high dimensions and large $n$.
For our choice of parameters, the SCS algorithm performs moderately better than the CBC construction when 
the weight sequence $\bsgamma=\{\gamma_j\}_{j=1}^d$ is slowly decaying, as can be seen by comparing
the relative difference between $e_{n,d}(\bsz_{\text{kor}}^*)$ and {$e_{n,d}(\bsz_{\text{cbc}})$} for the two different 
weight sequences in Table~3 and~4. For a more extensive analysis of this behaviour we refer again to \cite{Master_thesis:2015}
where a wider range of weight sequences is considered.

{\centering
\begin{minipage}{\linewidth}
\begin{figure}[H]
	\centering
	\caption{
	Numerical results of the SCS method in the weighted Korobov space with $d=100$, $\alpha=1$, $\beta_j = \frac23$, $\gamma_j = \frac23 (0.95)^j$ where $n=4001$ and $q=300$ in comparison to the CBC algorithm. 
	}
	\label{fig:Scatter}
	\includegraphics[width=0.88\textwidth]{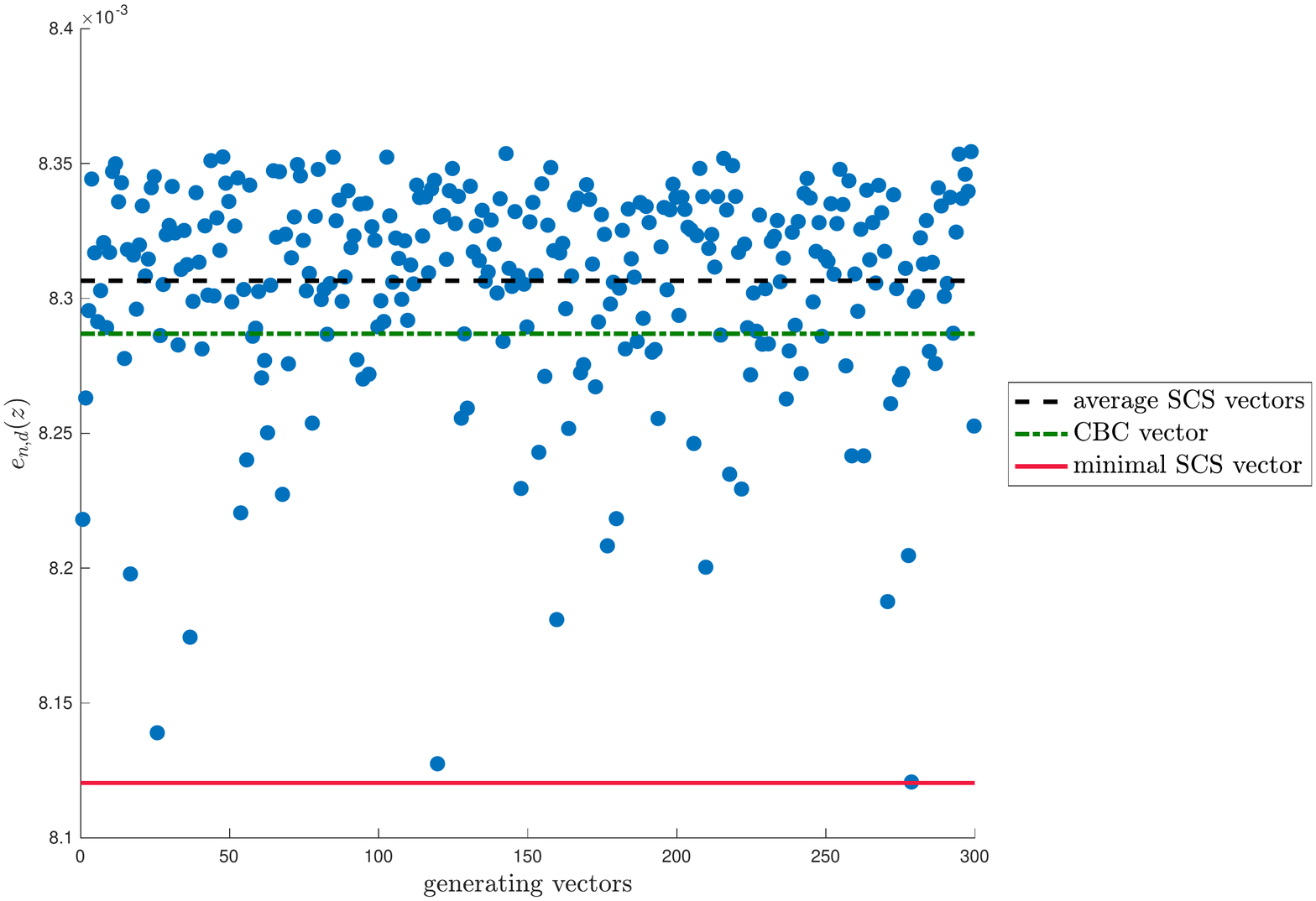}
\end{figure}
	\end{minipage}
	\par
}
Fig. \ref{fig:Scatter} illustrates the performance of the SCS method compared to the CBC method.
The blue dots represent the worst-case error values of lattice rules with $n=4001$ points constructed by the SCS method with $q=300$ Korobov-type initial vectors. The minimal error amongst the constructed lattice rules and the average over the $q$ random seed choices is indicated by the red or black line, respectively. The error corresponding to the CBC method is indicated by the green line. From the figure it becomes evident that the CBC algorithm outperforms the average of the SCS algorithm applied to randomly selected Korobov-type rules, but the best SCS results clearly win over the generating vector constructed by the CBC method. 

\section{Conclusion}

The results and experiments in the previous section, see \cite{Master_thesis:2015} for additional results, showed that it is possible to use the successive coordinate search algorithm to construct good generating vectors for rank-1 lattice rules. 
They also confirmed that randomized methods based on the SCS construction can provide generating vectors with smaller worst-case errors than the CBC vector. However, the computational cost of the SCS method can be several times higher while the gained improvement depends on the weight sequence $\bsgamma$. Future research could help to find a selection criterion for the starting vector $\bsz^0$ in order to reduce the construction cost of the SCS algorithm.
The SCS algorithm should further be regarded as a generalization of the existing component-by-component construction rather than a completely new algorithm. Due to the formulation of the successive coordinate search method it can also be used to improve existing lattice rules. Numerical experiments show that the improvements of the SCS method are higher when the decay of the weights $\bsgamma=\{\gamma_j\}_{j=1}^d$ is slow.

\vspace{15pt}
\noindent
\textbf{Acknowledgements}
We thank Peter Kritzer for some useful comments and discussions about the manuscript and we acknowledge financial support from the KU Leuven research fund (OT:3E130287 and C3:3E150478).

\bibliographystyle{spmpsci}
\bibliography{mybibfile}

\begin{thebibliography}{99.}
	
	\bibitem{Acta_Numerica_QMC:2013}
	J.~Dick, F.~Y. Kuo and I.~H. Sloan.
	\newblock {High-dimensional integration: The quasi-Monte Carlo way}.
	\newblock {\em Acta Numerica}, 22:133--288, 2013.
	
	\bibitem{Master_thesis:2015}
	A.~Ebert.
	\newblock {The component-by-component construction in weighted reproducing kernel Hilbert spaces - an optimization approach}.
	\newblock {\em Available on the document repository \textnormal{Lirias} of the KU Leuven}, Master's thesis at the Humboldt University of Berlin, 2015.		
	
	\bibitem{Kuo_CBC:2003}
	F.~Y. Kuo.
	\newblock {Component-by-component constructions achieve the optimal rate of convergence for multivariate integration 
		in weighted Korobov and Sobolev spaces}.
	\newblock {\em Journal of Complexity}, 19(3):301--320, 2003.
	
	\bibitem{Nuyens:2014}
	D.~Nuyens.
	\newblock The construction of good lattice rules and polynomial lattice rules.
	\newblock In Kritzer, P., Niederreiter, H., Pillichshammer, F., and Winterhof, A., editors
	\newblock {\em Uniform Distribution and  Quasi-Monte  Carlo  Methods:  Discrepancy,  Integration  and  Applications},
	\newblock {volume 15 of \em Radon Series on Computational and Applied Mathematics}
	223--256, De Gruyter, 2014.
	
	\bibitem{Nuyens_FCBC:2006}
	D.~Nuyens and R.~Cools.
	\newblock Fast algorithms for component-by-component construction of rank-1 lattice rules in shift-invariant reproducing kernel Hilbert spaces.
	\newblock {\em Math. Comp}, 75:903--920, 2006.
	
	\bibitem{Nuyens_FCBC_nonprime:2006}
	D.~Nuyens and R.~Cools.
	\newblock Fast component-by-component construction of rank-1 lattice rules with a non-prime number of points.
	\newblock {\em J.\ Complexity}, 22(1):4--28, 2006.
	
	\bibitem{L'Ecuyer_theory:2009}
	V.~Sinescu and P.~L'Ecuyer.
	\newblock On the behavior of weighted star discrepancy bounds for shifted lattice rules.
	\newblock {\em Monte Carlo and Quasi-Monte Carlo Methods 2008}, P.~L'Ecuyer and A.~B. Owen Eds, 603--616, 2009.			
	
	\bibitem{Sloan_and_Woz_Sobolev:1998}
	I.~H. Sloan and H.~Wo\'{z}niakowski.
	\newblock When are quasi-Monte Carlo algorithms efficient for high dimensional integrals?.
	\newblock {\em Journal of Complexity}, 14:1--33, 1998.
	
	\bibitem{Sloan_and_Woz_Korobov:2001}
	I.~H. Sloan and H.~Wo\'{z}niakowski.
	\newblock Tractability of multivariate integration for weighted Korobov classes.
	\newblock {\em Journal of Complexity}, 17:697--721, 2001.
	
\end{thebibliography}

\end{document}